\newtheorem{theorem}{Theorem}[section]
\newtheorem{proposition}[theorem]{Proposition}
\newtheorem{lemma}[theorem]{Lemma}
\newtheorem*{remark}{Remark}
\newcommand{\R}{\mathbb R}
\newcommand{\eps}{\varepsilon}
\newcommand{\dd}{\, \mathrm{d}}
\numberwithin{equation}{section}
\newcommand{\commentout}[1]{}
\title[Global estimates for the inhomogeneous Landau equation]{Global a priori estimates for the inhomogeneous Landau equation with moderately soft potentials}
\author{Stephen Cameron}
\address{Department of Mathematics, University of Chicago, 5734 S. University Ave., Chicago, IL 60637}
\email{scameron@math.uchicago.edu}
\author{Luis Silvestre}
\address{Department of Mathematics, University of Chicago, 5734 S. University Ave., Chicago, IL 60637}
\email{luis@math.uchicago.edu}
\author{Stanley Snelson}
\address{Department of Mathematics, University of Chicago, 5734 S. University Ave., Chicago, IL 60637}
\email{snelson@math.uchicago.edu}
\thanks{LS was partially supported by NSF grants DMS-1254332 and DMS-1362525. SS was partially supported by NSF grant DMS-1246999.}
\begin{document}

\begin{abstract}
We establish \emph{a priori} upper bounds for solutions to the spatially inhomogeneous Landau equation in the case of moderately soft potentials, with arbitrary initial data, under the assumption that mass, energy and entropy densities stay under control. Our pointwise estimates decay polynomially in the velocity variable. We also show that if the initial data satisfies a Gaussian upper bound, this bound is propagated for all positive times.
\end{abstract}
\maketitle

\section{Introduction}

We consider the spatially inhomogeneous Landau equation, a kinetic model from plasma physics that describes the evolution of a particle density $f(t,x,v)\geq 0$ in phase space (see, for example, \cite{chapmancowling, lifschitzpitaevskii}). It is written in divergence form as
\begin{equation}\label{e:divergence}
\partial_t f + v\cdot \nabla_x f = \nabla_v\cdot \left[\bar a(t,x,v)\nabla_v f\right] + \bar b(t,x,v)\cdot\nabla_v f + \bar c(t,x,v) f,
\end{equation}
where $t\in [0,T_0]$, $x\in\R^d$, and $v\in\R^d$. The coefficients $\bar a(t,x,v)\in \R^{d\times d}$, $\bar b(t,x,v) \in \R^d$, and $\bar c(t,x,v)\in \R$ are given by
\begin{align}
\bar a(t,x,v) &:= a_{d,\gamma}\int_{\R^d} \left( I - \frac w {|w|} \otimes \frac w {|w|}\right) |w|^{\gamma + 2} f(t,x,v-w) \dd w,\label{e:a}\\
\bar b(t,x,v) &:= b_{d,\gamma}\int_{\R^d} |w|^\gamma w f(t,x,v-w)\dd w,\label{e:b}\\
\bar c(t,x,v) &:= c_{d,\gamma}\int_{\R^d} |w|^\gamma f(t,x,v-w)\dd w, \label{e:c}
\end{align}
where $\gamma$ is a parameter in $[-d,\infty)$, and $a_{d,\gamma}$, $b_{d,\gamma}$, and $c_{d,\gamma}$ are constants. When $\gamma = -d$, the formula for $\bar c$ must be replaced by $\bar c = c_{d,\gamma} f$. Equation \eqref{e:divergence} arises as the limit of the Boltzmann equation as grazing collisions predominate, i.e. as the angular singularity approaches 2 (see the discussion in \cite{alexandre2004landau}). The case $d=3, \gamma=-3$, corresponds to particles interacting by Coulomb potentials in small scales. The case $\gamma\in [-d,0)$ is known as \emph{soft potentials}, $\gamma = 0$ is known as \emph{Maxwell molecules}, and $\gamma >0$ \emph{hard potentials}. In this paper, we focus on \emph{moderately soft potentials}, which is the case $\gamma \in (-2,0)$.

We assume that the mass density, energy density, and entropy density are bounded above, and the mass density is bounded below, uniformly in $t$ and $x$:
\begin{align}
0<m_0\leq \int_{\R^d} f(t,x,v)\dd v \leq M_0,\label{e:M0}\\
\int_{\R^d} |v|^2 f(t,x,v)\dd v \leq E_0,\label{e:E0}\\
\int_{\R^d} f(t,x,v) \log f(t,x,v) \dd v \leq H_0.\label{e:H0}
\end{align}
In the space homogeneous case, because of the conservation of mass and energy, and the monotonicity of the entropy, it is not necessary to make the assumptions \eqref{e:M0}, \eqref{e:E0} and \eqref{e:H0}. It would suffice to require the initial data to have finite mass, energy and entropy. It is currently unclear whether these hydrodynamic quanitites will stay under control for large times and away from equilibrium in the space inhomogeneous case. Thus, at this point, it is simply an assumption we make.

We now state our main results. Our first theorem makes no further assumption on the initial data $f_{in}:\R^{2d}\to\R$ beyond what is required for a weak solution to exist in $[0,T_0]$.

\begin{theorem}\label{t:decay-generation}
Let $\gamma \in (-2,0]$. If $f:[0,T_0]\times \R^{2d}\to \R$ is a bounded weak solution of \eqref{e:divergence} satisfying \eqref{e:M0}, \eqref{e:E0}, and \eqref{e:H0}, then there exists $K_0>0$ such that $f$ satisfies
\begin{equation}\label{e:decay}
f(t,x,v) \leq K_0 \left(1+t^{-d/2}\right) (1+|v|)^{-1},
\end{equation}
for all $(t,x,v)\in [0,T_0]\times \R^{2d}$. The constant $K_0$ depends on $d$, $\gamma$, $m_0$, $M_0$, $E_0$, and $H_0$.
\end{theorem}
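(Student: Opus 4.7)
The plan is to establish the bound by comparing $f$ to the explicit barrier function
\[
\phi(t,v) := K\bigl(1 + t^{-d/2}\bigr)(1+|v|)^{-1},
\]
for $K$ sufficiently large in terms of the constants in the hypotheses. Since $f$ is a bounded weak solution and $\phi \to \infty$ as $t \to 0^+$, the inequality $f < \phi$ holds automatically near $t = 0$, and the goal is to propagate it by a maximum principle.

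The first ingredient is a set of pointwise estimates on the Landau coefficients derived from \eqref{e:M0}--\eqref{e:H0}: upper bounds $|\bar a| \lesssim (1+|v|)^{\gamma+2}$, $|\bar b| \lesssim (1+|v|)^{\gamma+1}$, $|\bar c| \lesssim (1+|v|)^\gamma$, where the entropy $H_0$ is used to absorb the singular factor $|w|^\gamma$ near $w = 0$. The technical heart of this step is the anisotropic ellipticity lower bound
\[
\bar a(t,x,v)\xi\cdot\xi \gtrsim (1+|v|)^{\gamma+2}|\xi|^2 \quad \text{for } \xi \perp v,
\]
with a constant depending only on $m_0$ and $H_0$, established by a Desvillettes--Villani-type Poincar\'e argument ruling out concentration of $f(t,x,\cdot)$ along any hyperplane.

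The second step is the barrier computation. With $\psi(v) = (1+|v|)^{-1}$, the radial/transverse decomposition
\[
D_v^2 \psi = 2(1+|v|)^{-3}\tfrac{v}{|v|}\otimes\tfrac{v}{|v|} - |v|^{-1}(1+|v|)^{-2}\bigl(I-\tfrac{v}{|v|}\otimes\tfrac{v}{|v|}\bigr)
\]
combined with the perpendicular ellipticity forces $\bar a : D_v^2 \psi \lesssim -(1+|v|)^{\gamma-1}$ with a strictly negative implicit constant, which dominates the contributions from $\bar b \cdot \nabla_v \psi$ and $\bar c \psi$ (also $O((1+|v|)^{\gamma-1})$) for $|v|$ large. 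The comparison argument then proceeds by contradiction: at a first contact point $(t_0,x_0,v_0)$ where $f = \phi$, the constraint $\phi(t_0,v_0) \leq \|f\|_\infty$ forces $1+|v_0| \gtrsim K(1+t_0^{-d/2})/\|f\|_\infty$, placing the contact in the large-$|v|$ regime where the barrier computation is favorable. Combining the contact inequalities $\partial_t f \geq \partial_t \phi$, $\nabla_x f = 0$, $\nabla_v f = \nabla_v\phi$, $D_v^2 f \leq D_v^2 \phi$ with \eqref{e:divergence} yields $\partial_t \phi \leq \nabla_v\cdot(\bar a \nabla_v \phi) + \bar b \cdot \nabla_v \phi + \bar c \phi$, which contradicts a properly scaled version of the barrier estimate once $K$ is taken large enough.

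The main technical obstacle is the anisotropic ellipticity in the coefficient step, which underpins every pointwise estimate for the Landau equation and typically requires an auxiliary lemma tying the small eigenvalues of $\bar a$ to local spread properties of $f$. Secondary difficulties include the nonexistence of a pointwise contact point when $f$ does not decay as $|x|\to\infty$ (handled by regularization --- for instance, adding a small $x$-decaying penalty to $\phi$ --- or by a de Giorgi-type iteration on the level sets $\{f > \phi\}$), and reconciling the short-time blowup of $\partial_t\phi$ with the elliptic terms uniformly in $t$ by carefully exploiting the contact-point lower bound on $|v_0|$.
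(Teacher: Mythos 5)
Your proposal has a genuine gap that goes to the heart of the statement: a barrier of the form $\phi(t,v)=K(1+t^{-d/2})(1+|v|)^{-1}$ cannot be a supersolution for small $t$, so the comparison argument does not start. Differentiating, $\partial_t\phi=-\tfrac d2 Kt^{-d/2-1}(1+|v|)^{-1}$, which is $O(t^{-d/2-1})$, while the elliptic terms $\bar a:D_v^2\phi$ and $\bar c\phi$ are $O(t^{-d/2})$ (they scale like $\phi$ itself). As $t\to 0^+$ the left side of the desired inequality $\partial_t\phi\geq \bar a:D_v^2\phi+\bar c\phi$ becomes arbitrarily more negative than the right side, so the inequality fails — and precisely near $t=0$, which is where you need the barrier to dominate the (arbitrary bounded) initial data. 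You cannot rescue this by placing the contact point at large $|v_0|$: the constraint $\phi(t_0,v_0)\leq\|f\|_\infty$ gives a \emph{lower} bound on $(1+|v_0|)$, hence an \emph{upper} bound on $(1+|v_0|)^\gamma$ (since $\gamma<0$), which is the wrong direction for making the $\bar a:D_v^2\phi\approx -(1+|v|)^{\gamma-1}$ term compete with the $t^{-d/2-1}$ singularity. This is the familiar fact that $t^{-d/2}$ is a \emph{sub}solution rate (compare $t^{-d/2}$ for the heat equation); it comes from the $L^1\to L^\infty$ smoothing of the fundamental solution, which is an integral phenomenon, not a pointwise one. Barriers can \emph{propagate} decay (as the paper indeed does in Theorem \ref{t:propagation-gaussian}), but they cannot \emph{generate} a $t^{-d/2}$ short-time bound from bounded data.

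There are two further problems. First, your coefficient bound $|\bar c|\lesssim(1+|v|)^\gamma$ is not available from \eqref{e:M0}--\eqref{e:H0} alone; the paper's Lemma \ref{l:c} gives $\bar c\lesssim(1+|v|)^\gamma\bigl(1+\|f\|_{L^\infty(B_\rho(v))}\bigr)^{-\gamma/d}$, with a nontrivial power of the local $L^\infty$ norm, and it is the local $L^\infty$ norm, not the entropy, that controls the $|w|^\gamma$ singularity. Both this bound and your contact-point estimate feed $\|f\|_{L^\infty}$ into the final constant, contradicting the theorem's explicit claim (and the whole point of the result) that $K_0$ is independent of $\|f\|_{L^\infty}$. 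Second, the theorem only assumes a weak solution, so the "first contact point" reasoning with pointwise inequalities on $D^2_v f$ is unavailable without justification.

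The paper proves Theorem \ref{t:decay-generation} by a completely different mechanism: it combines the Moser/De Giorgi local $L^1_v\to L^\infty$ estimate from \cite{golse2016} (refined to an $L^\infty_{t,x}L^1_v$ version in Proposition \ref{p:harnack} and Lemma \ref{l:improved}) with a change of variables $\mathcal T_{z_0}$ (Lemma \ref{l:T}) that renormalizes the anisotropic ellipticity for large $|v_0|$. The small input quantity $\|f_T\|_{L^\infty_{t,x}L^1_v(Q_R)}$ is controlled by the energy bound, and the $\|f\|_\infty$-dependent coefficient bounds enter only through the source term $\|s\|_\infty^{d/(d+2)}$, with a sub-unity exponent. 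Lemma \ref{l:gain} thus turns an assumed bound $f\leq K(1+t^{-d/2})(1+|v|)^{-\alpha}$ into one with a strictly smaller power of $K$ and better velocity decay, and an explicit fixed-point/iteration closes the proof. This is an integral, scaling-critical argument — not a barrier argument — and it is the only way to simultaneously produce the $t^{-d/2}$ generation rate and the $\|f\|_\infty$-independence.
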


Note that even though we work with a bounded weak solution $f$, none of the constants in our estimates depend on $\|f\|_{L^\infty}$. Note also that our estimate does not depend on $T_0$. We use a definition of weak solution for which the estimates in \cite{golse2016} apply, since that is the main tool in our proofs.

We will show in Theorem \ref{t:polynomial} that an estimate of the form \eqref{e:decay} cannot hold with a power of $(1+|v|)$ less than $-(d+2)$, which also implies there is no \emph{a priori} exponential decay. On the other hand, if $f_{in}$ satisfies a Gaussian upper bound in the velocity variable, this bound is propagated:
\begin{theorem}\label{t:propagation-gaussian}
Let $f: [0,T_0]\times \R^{2d}\to \R$ be a bounded weak solution of the Landau equation \eqref{e:divergence} such that $f_{in}(x,v) \leq C_0 e^{-\alpha|v|^2}$, for some $C_0>0$ and a sufficiently small $\alpha>0$. Then
\[f(t,x,v) \leq C_1e^{-\alpha|v|^2},\]
where $C_1$ depends on $C_0$, $\alpha$, $d$, $\gamma$, $m_0$, $M_0$, $E_0$ and $H_0$. The value of $\alpha$ must be smaller than some $\alpha_0>0$ that depends on $\gamma$, $d$, $m_0$, $M_0$, $E_0$ and $H_0$.
\end{theorem}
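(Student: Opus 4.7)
The plan is a barrier argument: I want to construct a Gaussian supersolution $\bar f(v) := C_1 e^{-\alpha|v|^2}$ that pointwise dominates $f$, with $C_1$ chosen in terms of $C_0$, $K_0$, and the hydrodynamic parameters, and conclude by the maximum principle. The structural reason to expect this to work is that when applied to the Gaussian $\bar f$, the Landau diffusion produces a dominant negative term $-2\alpha \operatorname{tr}(\bar a)\bar f$ of order $\alpha\langle v\rangle^{\gamma+2}\bar f$, which for small $\alpha$ beats the quadratic correction $4\alpha^2(\bar a v\cdot v)\bar f$ coming from the second derivatives of the exponential, thanks to the anisotropy of $\bar a$ at large $|v|$.

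\textbf{Reduction to a finite first-contact point.} Non-compactness in $v$ is harmless: Theorem~\ref{t:decay-generation} gives $f \leq K_0(1+t^{-d/2})(1+|v|)^{-1}$, which is dominated by $\bar f$ for $|v|$ large enough. Non-compactness in $x$ is handled by the standard penalization, subtracting $\eps\psi(x)$ with $\psi\to\infty$ and letting $\eps\to 0$ at the end. So I may assume there is $(t_0,x_0,v_0)$ with $f(t_0,x_0,v_0)=\bar f(v_0)$ and $f(t,x,v)\leq\bar f(v)$ for all $t\leq t_0$. The crucial consequence is that the Gaussian pointwise bound $f(t_0,x,v)\leq C_1 e^{-\alpha|v|^2}$ holds throughout $\R^{2d}$ at time $t_0$, and this is what lets me control $\bar a, \bar b, \bar c$ in the convolutions \eqref{e:a}--\eqref{e:c} at the contact point.

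\textbf{Supersolution inequality.} At the contact point, the standard first- and second-order tests $\partial_t f\geq 0$, $\nabla_x f=0$, $\nabla_v f=\nabla_v\bar f$, $D_v^2 f\leq D_v^2\bar f$ reduce \eqref{e:divergence} to the scalar requirement
\begin{equation*}
4\alpha^2 \bar a v_0 \cdot v_0 - 2\alpha \operatorname{tr}(\bar a) - 2\alpha \bar b \cdot v_0 + \bar c < 0,
\end{equation*}
where I absorb the drift produced by $\nabla_v\cdot\bar a$ into $\bar b$. Using the Gaussian bound on $f(t_0,\cdot,\cdot)$ together with $m_0,M_0,E_0,H_0$ in \eqref{e:a}--\eqref{e:c}, one expects the standard asymptotics $\operatorname{tr}(\bar a)(v_0)\gtrsim|v_0|^{\gamma+2}$, $\bar a v_0\cdot v_0\lesssim|v_0|^{\gamma+2}$, $|\bar b\cdot v_0|\lesssim|v_0|^{\gamma+2}$, and $\bar c(v_0)\lesssim|v_0|^\gamma$ for $|v_0|$ large; the key point is that $\operatorname{tr}(\bar a)$ and $\bar a v_0\cdot v_0$ are of the same polynomial order but with a strictly better constant for the trace, because the perpendicular eigenvalues of $\bar a$ dominate. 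Plugging in, the bracket becomes $\alpha|v_0|^{\gamma+2}(-2c_1+4\alpha c_2+2c_3)+O(|v_0|^\gamma)$, strictly negative for $|v_0|\geq R_0$ whenever $\alpha<\alpha_0$, with $\alpha_0$ depending only on $d,\gamma,m_0,M_0,E_0,H_0$; this is the required contradiction.

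\textbf{Main obstacle.} The argument above only gives the strict supersolution inequality for $|v_0|\geq R_0$: at moderate $|v_0|$ the bracket can genuinely be positive, because $\bar c\bar f$ is a bounded positive contribution not compensated by any growing negative diffusion term. I would resolve this by enlarging $C_1$ so that $\bar f$ strictly dominates $f$ on $\{|v|\leq R_0\}$: for $t\geq\tau$ with $\tau$ fixed, Theorem~\ref{t:decay-generation} yields $f\leq K_0(1+\tau^{-d/2})$ on that ball, which is absorbable into $C_1 e^{-\alpha R_0^2}$; for $t\in[0,\tau]$, a short-time propagation argument based on the initial Gaussian hypothesis $f_{in}\leq C_0 e^{-\alpha|v|^2}$ and continuity of $f$ keeps $f$ below $\bar f$. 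This forces the first contact into $\{|v_0|>R_0\}$, where the supersolution argument applies. Making this small-$|v|$ reduction clean, without inflating $C_1$ with $T_0$, is the main technical point, and the reason the assumption $\alpha<\alpha_0$ cannot be dropped.
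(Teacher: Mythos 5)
Your proposal is the right kind of argument (a Gaussian barrier exploiting the anisotropy of $\bar a$ so that the negative tangential contribution $-2\alpha\operatorname{tr}(\bar a)\approx -\alpha|v|^{\gamma+2}$ beats the positive radial contribution $4\alpha^2 \bar a v\cdot v\lesssim \alpha^2|v|^{\gamma+2}$ for small $\alpha$), and the pointwise computation for $|v_0|\geq R_0$ is essentially what the paper does in Lemma \ref{l:super}. However, there are two genuine gaps, and they are precisely the two obstructions the paper's proof is structured to remove.

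First, the reduction to a finite first-contact point is false as stated. You claim that the \emph{a priori} bound $f\leq K_0(1+t^{-d/2})(1+|v|)^{-1}$ from Theorem \ref{t:decay-generation} "is dominated by $\bar f$ for $|v|$ large enough," but this is backwards: $C_1 e^{-\alpha|v|^2}$ decays much faster than $(1+|v|)^{-1}$, so for every fixed $t>0$ the Gaussian is eventually far below the polynomial barrier. Knowing $f\leq K_0(1+t^{-d/2})(1+|v|)^{-1}$ therefore gives no obstruction to first contact "at $|v|=\infty$," and a pointwise touching argument has nothing to touch. (There is also the more minor issue that the touching argument needs classical derivatives, whereas $f$ is only a weak solution.) The paper sidesteps both issues via the weak maximum principle of Lemma \ref{l:maximum-unbounded}, whose proof introduces barriers $\eps(1+|v|)$ that \emph{grow} in $|v|$ and exploits only the boundedness of the comparison function $g=f-\bar f$; this is the correct replacement for the first-contact reduction, and you cannot avoid it by citing Theorem \ref{t:decay-generation}.

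Second, the "short-time propagation argument" for $t\in[0,\tau]$ on $\{|v|\leq R_0\}$ is acknowledged as the main technical point but is not actually given, and the route you sketch is circular: showing that $f$ stays below $\bar f$ for $t<\tau$ is exactly the statement the contact argument was supposed to produce, and continuity alone gives a $\tau$ that is not uniform. The paper's resolution is a \emph{time-dependent} barrier $C_0 e^{Ct}\phi(v)$ rather than the static $C_1\phi$. Because $\bar a_{ij}\partial_{ij}\phi + \bar c\phi\leq C\phi$ holds for \emph{all} $v$ (not only $|v|\geq R_0$), the exponential factor $e^{Ct}$ absorbs the failure of the supersolution property at moderate $|v|$, and Lemma \ref{l:maximum-unbounded} gives $f\leq C_0 e^{Ct}\phi$ for all $t$, with no circularity. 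The uniform $L^\infty$ bound $C_2$ (coming from Lemma \ref{l:maximum-principle} on $[0,1]$ plus Theorem \ref{t:decay-generation} for $t\geq 1$) then lets one fix $t_0$ so that $C_0 e^{Ct_0}\phi>C_2\geq f$ on $\{|v|<R_0\}$, after which the \emph{static} barrier $C_0 e^{Ct_0}\phi$ is a strict supersolution outside and the comparison closes for $t\geq t_0$. To make your argument correct you would need to supply precisely these two ingredients: a maximum principle for bounded weak sub/supersolutions on the whole space, and a time-dependent barrier (or the quantitative short-time propagation of Lemma \ref{l:maximum-principle}) to control the moderate-$|v|$ region.
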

This estimate is also independent of $T_0$. As a consequence of Theorem \ref{t:propagation-gaussian}, we will show in Theorem \ref{t:holder} that in this regime, $f$ is uniformly H\"older continuous on $[t_0,T_0]\times \R^{2d}$ for any $t_0\in (0,T_0)$. 

Note that under some formal asymptotic regime, the hydrodynamic quantities of the inhomogeneous Landau equations converge to solutions of the compressible Euler equation \cite{bardos1991}, which is known to develop singularities in finite time. Should we expect singularities to develop in finite time for the inhomogeneous Landau equation as well? That question seems to be out of reach with current techniques. A more realistic project is to prove that the solutions stay smooth for as long as the hydrodynamic quantities stay under control (as in \eqref{e:M0}, \eqref{e:E0} and \eqref{e:H0}). The results in this paper are an important step forward in that program.

\subsection{Related work} It was established in \cite{liu2014regularization} that solutions to \eqref{e:divergence} become $C^\infty$ smooth in all three variables conditionally to the solution being away from vacuum, bounded in $H^8$ (in the $d=3$ case) and having infinitely many finite moments. It would be convenient to extend this conditional regularity result to have less stringent assumptions. In particular, the assumptions \eqref{e:M0}, \eqref{e:E0} and \eqref{e:H0} are a much weaker assumption, which is also in terms of physically relevant hydrodynamic quantities. In \cite{golse2016}, the authors show how their local H\"older continuity result for linear kinetic equations with rough coefficients can be applied to solutions of the Landau equation provided that \eqref{e:M0}, \eqref{e:E0} and \eqref{e:H0} hold and in addition the solution $f$ is assumed to be bounded. While we also assume boundedness of $f$, our results do not quantitatively rely on this and in addition tell us some information about the decay for large velocities.


The local estimates for parabolic kinetic equations with rough coefficients play an important role in this work. Local $L^\infty$ estimates were obtained in \cite{pascucci2004ultraparabolic} using Moser iteration, and local H\"older estimates were proven in \cite{wang2009ultraparabolic, wang2011ultraparabolic} using a weak Poincar\'e inequality. A new proof was given in \cite{golse2016} using a version of De Giorgi's method.

Classical solutions for \eqref{e:divergence} have so far only been constructed in a close-to-equilibrium setting: see the work of Guo \cite{guo2002periodic} and Mouhot-Neumann \cite{mouhot2006equilibrium}. A suitable notion of weak solution, for general initial data, was constructed by Alexandre-Villani \cite{alexandre2004landau, villani1996global}.

The global $L^\infty$ estimate we prove in Theorem \ref{t:decay-generation} is similar to an estimate in \cite{luis-Boltzmann} for the Boltzmann equation. The techniques in the proof are completely different. The propagation of Gaussian bounds that we give in Theorem \ref{t:propagation-gaussian} is reminiscent of the result in \cite{gamba2009}. That result is for the space-homogeneous Boltzmann equation with cut-off, which is in some sense the opposite of the Landau equation in terms of the angular singularity in the cross section.

In order to keep track of the constants for parabolic regularization estimates (as in \cite{golse2016}) for large velocities, we describe a change of variables in Lemma \ref{l:T}. This change of variables may be useful in other contexts. It is related to one mentioned in the appendix of \cite{imbert2016weak} for the Boltzmann equation.

For the homogeneous Landau equation, which arises when $f$ is assumed to be independent of $x$ in \eqref{e:divergence}, the theory is more developed. The $C^\infty$ smoothing is established for hard potentials in \cite{desvillettes2000landau} and for Maxwell molecules in \cite{villani1998landau}, under the assumption that the initial data has finite mass and energy. Propagation of $L^p$ estimates in the case of moderately soft potentials was shown in \cite{wu2014global} and \cite{alexandre2015apriori}. Global upper bounds in a weighted $L^1_t(L^3_v)$ space were established in \cite{desvillettes2015landau}, even for $\gamma=-3$, as a consequence of entropy dissipation. Global $L^\infty$ bounds that do not depend on $f_{in}$ and that do not degenerate as $t\to\infty$ were derived in \cite{silvestre2015landau} for moderately soft potentials, and this result also implies $C^2$ smoothing by standard parabolic regularity theory. 

Note that in the space homogeneous case our assumptions \eqref{e:M0}, \eqref{e:E0} and \eqref{e:H0} hold for all $t>0$ provided that the initial data has finite mass, energy and entropy. Both Theorems \ref{t:decay-generation} and \ref{t:propagation-gaussian} are new results even in the space homogeneous case. The previous results for soft potentials do not address the decay of the solution for large velocities.


\subsection{Organization of the paper} In Section \ref{s:prelim}, we establish precise bounds on the coefficients $\bar a$, $\bar b$, and $\bar c$ in \eqref{e:divergence}. In Section \ref{s:local}, we derive the local estimates we will use to prove Theorem \ref{t:decay-generation}, starting from the Harnack estimate of \cite{golse2016}. Section \ref{s:global} contains the proof of Theorem \ref{t:decay-generation} and a propagating lower bound that implies the exponent of $(1+|v|)$ in \eqref{e:decay} cannot be arbitrarily high. In Section \ref{s:gaussian}, we prove Theorem \ref{t:propagation-gaussian} and the H\"older estimate, Theorem \ref{t:holder}. In Appendix \ref{s:A}, we derive a convenient maximum principle for kinetic Fokker-Planck equations.

\subsection{Notation} We say a constant is universal if it depends only on $d$, $\gamma$, $m_0$, $M_0$, $E_0$, and $H_0$. The notation $A\lesssim B$ means that $A\leq CB$ for a universal constant $C$, and $A \approx B$ means that $A\lesssim B$ and $B\lesssim A$. We will let $z=(t,x,v)$ denote a point in $\R_+\times \R^d\times \R^d$. For any $z_0=(t_0,x_0,v_0)$, define the Galilean transformation
\[\mathcal S_{z_0}(t,x,v) := (t_0+t, x_0 + x +tv_0,v_0+v).\]
We also have
\[\mathcal S_{z_0}^{-1}(t,x,v) := (t-t_0, x - x_0 -(t-t_0)v_0,v-v_0).\]
For any $r>0$ and $z_0 = (t_0,x_0,v_0)$, let
\[ Q_r(z_0) := (t_0-r^2,t_0] \times \{x : |x-x_0 - (t-t_0) v_0| < r^3 \}\times B_r(v_0),  \]
and $Q_r = Q_r(0,0,0)$. The shift $\mathcal S_{z_0}$ and the scaling of $Q_r$ correspond to the symmetries of the left-hand side of \eqref{e:divergence}. We will sometimes write $\partial_i$ or $\partial_{ij}$, and these will always refer to differentiation in $v$.

\section{The coefficients of the Landau equation}\label{s:prelim}

In this section we review various estimates of the coefficients $\bar a$, $\bar b$ and $\bar c$ in \eqref{e:divergence}. In calculating these upper and lower bounds, the dependence of $f$ on $t$ and $x$ is irrelevent, so in this section we will write $f(v)$ and $\bar a(v)$, etc.

\begin{lemma}\label{l:a}
Let $\gamma \in [-2, 0)$, and assume $f$ satisfies \eqref{e:M0}, \eqref{e:E0}, and \eqref{e:H0}. Then there exist constants $c$ and $C$ depending on $d$, $\gamma$, $m_0$, $M_0$, $E_0$, and $H_0$, such that for unit vectors $e\in \R^d$,
\begin{equation}\label{e:aij-lower}
 \bar a_{ij}(v) e_i e_j \geq c\begin{cases} (1+|v|)^{\gamma}, & e\in \mathbb S^{d-1}, \\
(1+|v|)^{\gamma+2},& e\cdot v = 0,\end{cases}
\end{equation}
and
\begin{equation}\label{e:aij-upper}
 \bar a_{ij}(v) e_i e_j \leq C\begin{cases} (1+|v|)^{\gamma+2}, &e\in \mathbb S^{d-1},\\
(1+|v|)^{\gamma},  & e\cdot v = |v|,\end{cases}
\end{equation}
where $\bar a_{ij}(v)$ is defined by \eqref{e:a}.
\end{lemma}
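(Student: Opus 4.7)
The plan is to first make a change of variables $v' = v-w$ and use the algebraic identity $|w|^2 - (w\cdot e)^2 = |w_\perp^e|^2$, where $w_\perp^e$ denotes the component of $w$ orthogonal to the unit vector $e$, to rewrite
\begin{equation*}
\bar a_{ij}(v) e_i e_j \;=\; a_{d,\gamma}\int_{\R^d} \bigl|(v-v')_\perp^e\bigr|^2\,|v-v'|^\gamma\,f(v')\dd v'.
\end{equation*}
All four inequalities in the lemma then become two-sided estimates of this scalar integral. Throughout the argument I set $R := (2E_0/m_0)^{1/2}$, so that Chebyshev applied to \eqref{e:E0} and \eqref{e:M0} yields $\int_{B_R} f \geq m_0/2$.

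For the upper bounds, note that $\gamma+2\in[0,2)$, so $|v-v'|^{\gamma+2}$ is non-singular. The general $e$ inequality follows from $|(v-v')_\perp^e|^2 \leq |v-v'|^2$ together with the pointwise bound $|v-v'|^{\gamma+2} \lesssim |v|^{\gamma+2} + |v'|^{\gamma+2}$, controlling the $v'$-piece by interpolating $|v'|^{\gamma+2}$ between $M_0$ and $E_0$. For the parallel case $e = v/|v|$, one has $(v-v')_\perp^e = -v'_\perp^e$, and I split the integral at $|v-v'|=|v|/2$. On $\{|v-v'|\geq |v|/2\}$, use $|v-v'|^\gamma \lesssim |v|^\gamma$ and $|v'_\perp^e|^2\leq |v'|^2$, integrated against $E_0$; on $\{|v-v'|<|v|/2\}$ one has $|v'|\geq |v|/2$, so Chebyshev gives $f$-mass at most $4E_0/|v|^2$ there, while $|v'_\perp^e|^2|v-v'|^\gamma \leq |v-v'|^{\gamma+2} \leq (|v|/2)^{\gamma+2}$. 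Both contributions sum to $\lesssim |v|^\gamma$, matching the claim for large $|v|$; bounded $|v|$ is absorbed by continuity of the integral.

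For the lower bounds the goal is to produce a set $S\subset B_R$ carrying $f$-mass $\gtrsim m_0$ on which $|(v-v')_\perp^e|$ is bounded below. The crucial input is equi-integrability of $f$: \eqref{e:H0} together with \eqref{e:M0} gives, via de la Vall\'ee Poussin / Dunford--Pettis, a universal $\delta>0$ depending only on $m_0, M_0, H_0$ such that $\int_A f\leq m_0/4$ whenever $|A|\leq \delta^{d-1}R$. Apply this to the shifted cylinder $C_\delta(v,e) := \{v'\in B_R : |v'_\perp^e - v_\perp^e|<\delta\}$, whose Lebesgue measure is $\lesssim \delta^{d-1}R$: the set $S := B_R \setminus C_\delta(v,e)$ satisfies $\int_S f\geq m_0/4$ and $|(v-v')_\perp^e| \geq \delta$ throughout. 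Combining with $|v-v'|\leq |v|+R$ and $\gamma<0$ gives, for any unit $e$, $\bar a_{ij}e_i e_j \gtrsim \delta^2(1+|v|)^\gamma$. In the perpendicular case $e\cdot v=0$ one additionally has $v_\perp^e=v$, so for $|v|\geq 2R$ the trivial estimate $|(v-v')_\perp^e|\geq |v|-R\geq |v|/2$ holds on all of $B_R$, upgrading the bound to $\gtrsim (1+|v|)^{\gamma+2}$; small $|v|$ is already covered by the general case.

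The main obstacle is the non-concentration step: one must make the de la Vall\'ee Poussin / equi-integrability estimate quantitative in terms of $m_0, M_0, H_0$ so that the threshold $\delta$ is a universal constant independent of the specific $f$. Everything else is careful but routine splitting of integrals into near and far zones and invocation of the mass, moment, and Chebyshev bounds.
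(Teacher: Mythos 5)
Your upper-bound argument is essentially identical to the paper's: the general-$e$ case drops the angular factor and uses $|v-z|^{\gamma+2}\lesssim |v|^{\gamma+2}+|z|^{\gamma+2}$ against $M_0$ and $E_0$, and the parallel case splits at $|v-v'|=|v|/2$ and uses the energy bound on each zone (the paper writes the near-zone estimate via $|\sin\theta|\leq|v-z|/|v|$ paired with $\int|z|^2 f$, you phrase it via Chebyshev on $\{|v'|\geq|v|/2\}$ — same content).

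For the lower bounds the two diverge only in presentation: the paper does not prove them at all, citing \cite[Lemma 3.1]{silvestre2015landau}, while you give a self-contained argument. Your argument is the standard and correct one: Chebyshev puts mass $\geq m_0/2$ on $B_R$, the entropy bound gives quantitative non-concentration so that a thin cylinder $C_\delta(v,e)$ around the line through $v$ in direction $e$ carries mass $\leq m_0/4$, and on $S = B_R\setminus C_\delta(v,e)$ the integrand $|(v-v')_\perp^e|^2|v-v'|^\gamma$ is bounded below by $\delta^2(|v|+R)^\gamma$, giving $\gtrsim(1+|v|)^\gamma$; when $e\cdot v=0$ and $|v|\geq 2R$ one instead has $|(v-v')_\perp^e|\geq|v|-R$ on all of $B_R$, upgrading to $(1+|v|)^{\gamma+2}$. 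One small inaccuracy: the equi-integrability threshold $\delta$ depends on $E_0$ as well as $m_0,M_0,H_0$, since making the de la Vall\'ee Poussin estimate quantitative requires an upper bound on $\int f\log_+ f$, which one obtains from $\int f\log f\leq H_0$ together with the lower bound $\int f\log f\geq -C(E_0,d)$ (via the pointwise inequality $-s\log s\leq |v|^2 s + e^{-1-|v|^2}$). Since $E_0$ is already a universal constant this does not affect the conclusion, but the stated dependence should include it.
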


\begin{proof}
The lower bounds \eqref{e:aij-lower} are proven in \cite[Lemma 3.1]{silvestre2015landau}. For the upper bounds, the formula \eqref{e:a} implies
\begin{align*}
\bar a_{ij}(v)e_i e_j &= a_{d,\gamma} \int_{\R^d} \left(1 - \left(\frac {w\cdot e}{|w|}\right)^2\right)|w|^{\gamma+2} f(v-w)\dd w \\
&\lesssim \int_{\R^d} |w|^{\gamma+2} f(v-w)\dd w\\
&= \int_{\R^d} |v-z|^{\gamma+2} f(z)\dd z\\
&\lesssim \int_{\R^d} (|v|^{\gamma+2} + |z|^{\gamma+2})f(z)\dd z\\
&\lesssim  M_0(1+|v|^{\gamma+2}) +E_0,
\end{align*}
since $0\leq\gamma+2\leq 2$.

The above bound is valid for all $e\in \mathbb S^{d-1}$. If $e$ is parallel to $v$, then
\begin{align*}
 \int_{\R^d} \left(1 - \left(\frac {w\cdot e}{|w|}\right)^2\right)|w|^{\gamma+2} f(v-w)\dd w & = \int_{\R^d} \left(1 - \left(\frac {(v-z)\cdot e}{|v-z|}\right)^2\right)|v-z|^{\gamma+2} f(z)\dd z\\
&=  \int_{\R^d} \left(|v-z|^2 - \left(|v|-z\cdot e\right)^2\right)|v-z|^{\gamma} f(z)\dd z\\
&=  \int_{\R^d} \left(|z|^2 - (z\cdot e)^2\right)|v-z|^{\gamma} f(z)\dd z\\
&= \int_{\R^d} |z|^2\sin^2\theta |v-z|^{\gamma} f(z)\dd z,\end{align*}
where $\theta$ is the angle between $v$ and $z$. Let $R = |v|/2$. If $z\in B_R(v)$, then $|\sin \theta| \leq |v-z|/|v|$, and
\begin{align*}
\int_{B_R(v)} |z|^2\sin^2\theta |v-z|^\gamma f(z)\dd z &\leq \int_{B_R(v)} |z|^2 |v|^{-2}  |v-z|^{\gamma+2} f(z)\dd z\\
&\leq \frac {|v|^{\gamma}}{2^{\gamma+2}}\int_{B_R(v)}|z|^2f(z)\dd z \lesssim E_0|v|^{\gamma}.
\end{align*}
If $|v-z| \geq R=|v|/2$, then $|v-z|^\gamma \lesssim |v|^\gamma$, and we have
\begin{align*}
\int_{\R^d\setminus B_R(v)} |z|^2\sin^2\theta |v-z|^\gamma f(z)\dd z &\lesssim  |v|^{\gamma} \int_{\R^d\setminus B_R(v)} |z|^2 f(z)\dd z \lesssim E_0 |v|^{\gamma}.
\end{align*}
\end{proof}

In the proof of Theorem \ref{t:decay-generation}, we will need to keep track of how the bounds on $\bar b$ and $\bar c$ in the next two lemmas depend on the local $L^\infty$ norm of $f$. In Lemma \ref{l:c} and Lemma \ref{l:b}, $\|f\|_{L^\infty(A)}$ means $\|f(t,x,\cdot)\|_{L^\infty(A)}$ for any set $A\subseteq\R^d$.

\begin{lemma}\label{l:c}
Let $f$ satisfy \eqref{e:M0}, \eqref{e:E0}, and \eqref{e:H0}. Then $\bar c(v)$ defined by \eqref{e:c} satisfies
\begin{equation*}
\bar c(v) \lesssim \begin{cases} (1+|v|)^\gamma(1+\|f\|_{L^\infty(B_{\rho}(v))})^{-\gamma/d}, &\dfrac{-2d}{d+2}\leq \gamma < 0,\\
(1+|v|)^{-2-2\gamma/d}\left(1+\|f\|_{L^\infty(B_{\rho}(v))}  \right)^{-\gamma/d}, &-d < \gamma < \dfrac{-2d}{d+2},\end{cases}
\end{equation*}
where the constants 
depend on $d, \gamma, M_0,$ and $E_0$, and
\[\rho = \begin{cases} 1, &|v|<2,\\
|v|^{-2/d}, &|v|\geq 2.
\end{cases}
\]

\end{lemma}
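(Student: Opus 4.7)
The plan is to decompose $\bar c(v) = c_{d,\gamma}\int_{\R^d} |v-z|^\gamma f(z)\dd z$ into a small ball around $v$ (where we use the local $L^\infty$ bound on $f$) and a remainder (where we exploit the mass and energy bounds from \eqref{e:M0} and \eqref{e:E0}). Since $\gamma > -d$ the singularity at $z=v$ is integrable, and since $\gamma < 0$ we get extra decay away from $z=v$.

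For a radius $r>0$ to be chosen, one has
\[ \bar c(v) \lesssim \|f\|_{L^\infty(B_r(v))} r^{d+\gamma} + \int_{\R^d\setminus B_r(v)} |v-z|^\gamma f(z)\dd z. \]
When $|v|\geq 2$ I would further split the tail integral at $|v|/2$: on the annulus $B_{|v|/2}(v)\setminus B_r(v)$ bound $|v-z|^\gamma \leq r^\gamma$ and use $|z|\geq |v|/2$ together with \eqref{e:E0} to dominate $\int f(z)\dd z$ by $4|v|^{-2}E_0$; on the complement of $B_{|v|/2}(v)$ use $|v-z|^\gamma \lesssim |v|^\gamma$ and \eqref{e:M0}. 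This yields
\[ \bar c(v) \lesssim \|f\|_{L^\infty(B_r(v))} r^{d+\gamma} + r^\gamma |v|^{-2} + |v|^\gamma. \]

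I would then optimize in $r$ by balancing the first two terms. Setting $r = |v|^{-2/d}(1+\|f\|_{L^\infty})^{-1/d}$ makes both equal to $|v|^{-2-2\gamma/d}(1+\|f\|_{L^\infty})^{-\gamma/d}$. This $r$ lies below $\rho = |v|^{-2/d}$ precisely when $\|f\|_{L^\infty}\geq 1$, in which case $\|f\|_{L^\infty(B_r(v))}\leq \|f\|_{L^\infty(B_\rho(v))}$; in the opposite regime we simply take $r = \rho$, noting that the factor $(1+\|f\|_{L^\infty})^{-\gamma/d}$ is then of order one. Summing the three contributions,
\[ \bar c(v) \lesssim (1+|v|)^{-2-2\gamma/d}(1+\|f\|_{L^\infty(B_\rho(v))})^{-\gamma/d} + (1+|v|)^\gamma. \]
Comparing the two exponents in $(1+|v|)$ one finds $-2-2\gamma/d \geq \gamma$ iff $\gamma \leq -2d/(d+2)$, and since $(1+\|f\|_{L^\infty})^{-\gamma/d}\geq 1$ the dominant term always absorbs the other, producing the two cases in the statement.

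For $|v| < 2$ we take $\rho = 1$ and the middle annulus is empty, leaving only the first and third terms. Balancing them with $r$ chosen as the minimum of $(1+\|f\|_{L^\infty})^{-1/d}$ and $1$ gives $\bar c(v) \lesssim (1+\|f\|_{L^\infty(B_1(v))})^{-\gamma/d}$, which is consistent with both cases because $(1+|v|)\approx 1$ in that regime. The main obstacle is the bookkeeping: one must verify in every subcase (small versus large $|v|$, small versus large $\|f\|_{L^\infty}$) that the chosen $r$ lies in $(0,\rho]$ and that when it does not, the naive choice $r=\rho$ still yields the claimed bound with constants depending only on $d,\gamma,M_0,E_0$.
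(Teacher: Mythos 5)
Your decomposition (a small ball $B_r(v)$ where you invoke the local $L^\infty$ bound, a middle annulus where you trade $|v-z|^\gamma\leq r^\gamma$ against the energy tail via $|z|\geq |v|/2$, and the far region controlled by $M_0$), together with the choice $r=|v|^{-2/d}\bigl(1+\|f\|_{L^\infty(B_\rho(v))}\bigr)^{-1/d}$ and the treatment of $|v|\leq 2$, is exactly the paper's argument. One minor bookkeeping slip: with that choice $r\leq\rho$ holds \emph{always}, since $1+\|f\|_{L^\infty}\geq 1$ (not only when $\|f\|_{L^\infty}\geq 1$), so the ``opposite regime'' you guard against is vacuous and the fallback $r=\rho$ is never needed; the argument is unaffected.
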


\begin{proof}
%
Assume first $|v|\geq 2$. Let $r := |v|^{-2/d} (1+\|f\|_{L^\infty(B_\rho(v))})^{-1/d} < \rho$. Consider
\begin{align*}
I_1 = \int_{B_{r}} |w|^\gamma  &  f(v-w)\dd w, \quad I_2 = \int_{B_{|v|/2}\setminus B_{r} } |w|^\gamma f(v-w)\dd w,
\\& \quad I_3 = \int_{\R^d \setminus B_{|v|/2}} |w|^\gamma f(v-w)\dd w.
\end{align*}
We have
\[I_1 \lesssim \|f\|_{L^\infty(B_\rho(v)) } r^{d+\gamma} \lesssim |v|^{-2-2\gamma/d} \|f\|_{L^\infty(B_\rho(v))}^{-\gamma/d} .\]
\begin{align*}
I_2 \lesssim r^\gamma |v|^{-2}\int\limits_{B_{|v|/2}} |v-w|^2 f(v-w) \dd w &\lesssim E_0 |v|^{-2-2\gamma/d} (1+\|f\|_{L^\infty(B_\rho(v))})^{-\gamma/d}.
 \end{align*}

Finally, for $|w|\geq |v|/2$, we have $|w|^\gamma \lesssim |v|^{\gamma}$, and
\begin{align*}
I_3 \lesssim |v|^\gamma \int_{\R^d\setminus B_{|v|/2}}f(v-w)\dd w \leq M_0|v|^\gamma.
 \end{align*}
Thus $\bar c(v)\lesssim (1+\|f\|_{L^\infty(B_\rho(v))})^{-\gamma/d}|v|^{-2-2\gamma/d} + |v|^\gamma$ for $|v|>2$.

When $\gamma \in \left(-d, \dfrac{-2d}{d+2}\right)$, $-2-2\gamma/d > \gamma$ and we get
\[ \bar c(v)\lesssim (1+\|f\|_{L^\infty(B_\rho(v))})^{-\gamma/d}|v|^{-2-2\gamma/d}.\]
When $\gamma \in \left[\dfrac{-2d}{d+2} , 0\right)$, $\gamma > -2-2\gamma/d$ and we get
\[ \bar c(v)\lesssim (1+\|f\|_{L^\infty(B_\rho(v))})^{-\gamma/d}|v|^{\gamma}.\]
This completes the proof in the case $|v| > 2$.

For $|v| \leq 2$, $\gamma\in (-d, 0]$, and any $R\in (0,1]$ we have that
\begin{align*}
\int\limits_{\R^d} |w|^\gamma f(v-w) \dd w &= \int_{B_R} |w|^{\gamma} f(v-w)\dd w + \int_{\R^d\setminus B_R} |w|^\gamma f(v-w)\dd w, \\
&\lesssim R^{d+\gamma}\|f\|_{L^\infty(B_1(v))} + R^\gamma M_0.
\end{align*}
Choosing $R = (1+\|f\|_{L^\infty(B_1(v)) } )^{-1/d}$, we then have
\[\bar c(v) \lesssim (R^{d+\gamma}\|f\|_{L^\infty(B_1(v))} + R^\gamma M_0) \lesssim (1+ \|f\|_{L^\infty(B_1(v))})^{-\gamma/d},\] for $|v|\leq 2$, completing the proof.
\end{proof}


\begin{lemma}\label{l:b}
Let $f$ satisfy \eqref{e:M0}, \eqref{e:E0}, and \eqref{e:H0}.
 Then $\bar b(v)$ defined by \eqref{e:b} satisfies
the estimate
\begin{equation}\label{e:b-upper}
|\bar b(v)| \lesssim \begin{cases}(1+|v|)^{\gamma+1}(1+\|f\|_{L^\infty(B_{\rho}(v))})^{-(\gamma+1)/d}, &\gamma\in [-2,-1),\\[2ex]
(1+ |v|)^{\gamma+1}, &\gamma\in [-1,0]\end{cases}
\end{equation}
where the constants depend on $d, \gamma, M_0,$ and $E_0$, and
\[\rho = \begin{cases} 1, &|v|<2,\\
|v|^{-2/d}, &|v|\geq 2.\end{cases}
\]

\end{lemma}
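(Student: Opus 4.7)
The plan is to mirror the proof of Lemma \ref{l:c} almost verbatim, with the exponent $\gamma$ replaced by $\gamma+1$, since $|\bar b(v)| \lesssim \int_{\R^d} |w|^{\gamma+1} f(v-w)\,\mathrm{d}w$. The two cases in the statement of the lemma reflect whether this kernel is nonsingular ($\gamma+1 \geq 0$) or mildly singular ($\gamma+1 \in [-1,0)$) at $w=0$.

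Case $\gamma \in [-1,0]$: There is no singularity, so the proof is short. Writing $w = v-z$ and using $|v-z|^{\gamma+1} \lesssim |v|^{\gamma+1} + |z|^{\gamma+1}$ together with $|z|^{\gamma+1} \leq 1 + |z|^2$ (since $\gamma+1 \in [0,1]$), the integral is controlled by $M_0|v|^{\gamma+1} + M_0 + E_0 \lesssim (1+|v|)^{\gamma+1}$, using that the exponent $\gamma+1$ is nonnegative so $|v|^{\gamma+1}+1 \approx (1+|v|)^{\gamma+1}$.

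Case $\gamma \in [-2,-1)$: Here $\gamma+1 \in [-1,0)$, and I run the same three-region splitting as in Lemma \ref{l:c}. For $|v| \geq 2$, set $r = |v|^{-2/d}(1+\|f\|_{L^\infty(B_\rho(v))})^{-1/d}$ and decompose $\R^d$ into $B_r$, $B_{|v|/2}\setminus B_r$, and $\R^d\setminus B_{|v|/2}$. On $B_r$, bound $f$ by its $L^\infty$ norm and integrate $|w|^{\gamma+1}$, giving $\|f\|\, r^{d+\gamma+1}$; on the annulus, use $|w|^{\gamma+1} \leq r^{\gamma+1}$ and $|v-w| \geq |v|/2$ to pull out a factor $E_0|v|^{-2}$; on the far region use $|w|^{\gamma+1} \leq (|v|/2)^{\gamma+1}$ together with the mass bound. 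These contributions are, respectively, $\lesssim |v|^{-2-2(\gamma+1)/d}(1+\|f\|)^{-(\gamma+1)/d}$, the same expression again, and $|v|^{\gamma+1}M_0$. For $|v| \leq 2$, use the simpler two-region split with $R = (1+\|f\|_{L^\infty(B_1(v))})^{-1/d}$, which yields a bound of order $(1+\|f\|)^{-(\gamma+1)/d}$ and matches the statement.

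The only subtlety, as in Lemma \ref{l:c}, is deciding which of the two $|v|$-dependent terms dominates. The inequality $-2-2(\gamma+1)/d \leq \gamma+1$ is equivalent to $\gamma+1 \geq -2d/(d+2)$; for $d \geq 2$ this holds throughout $\gamma+1 \in [-1,0)$ since $-2d/(d+2) \leq -1$. Consequently $|v|^{\gamma+1}$ dominates $|v|^{-2-2(\gamma+1)/d}$ for $|v|\geq 1$, and all three pieces collapse into a single bound $|v|^{\gamma+1}(1+\|f\|)^{-(\gamma+1)/d}$, the factor $(1+\|f\|)^{-(\gamma+1)/d} \geq 1$ absorbing the pure $|v|^{\gamma+1} M_0$ contribution. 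This clean collapse is precisely why the second case of Lemma \ref{l:b} avoids the dichotomy that appears in Lemma \ref{l:c}; no real obstacle beyond bookkeeping is expected.
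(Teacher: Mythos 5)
Your proof is correct and follows essentially the same line as the paper. The only difference is cosmetic: for $\gamma\in[-2,-1)$ the paper simply observes that $1+\gamma\in[-2d/(d+2),0)$ and invokes Lemma \ref{l:c} with exponent $1+\gamma$ in place of $\gamma$, which is exactly the three-region argument you re-derive, and for $\gamma\in[-1,0]$ it bounds $\int|z|^{\gamma+1}f\,\mathrm{d}z$ by $E_0^{(1+\gamma)/2}M_0^{(1-\gamma)/2}$ via interpolation rather than your pointwise inequality $|z|^{\gamma+1}\le 1+|z|^2$, which is an equivalent step.
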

\begin{proof}
Taking norms, we have
\[|\bar b(v)| \lesssim \int_{\R^d} |w|^{1+\gamma}f(v-w)\dd w.\]
If $\gamma \in [-2,-1)$, then $0>1+\gamma \geq -1 \geq \displaystyle\frac{-2d}{d+2}$, and the conclusion follows from Lemma \ref{l:c}. If $\gamma\in [-1,0]$, we have
\begin{align*}
|\bar b(v)| &\lesssim \int_{\R^d} (|v|^{\gamma+1} + |v-w|^{\gamma+1} ) f(v-w)\dd w \\
&\lesssim |v|^{\gamma+1} M_0 + E_0^{(1+\gamma)/2} M_0^{(1-\gamma)/2} \lesssim (1+ |v|)^{\gamma+1}.
\end{align*}
\end{proof}

\section{Local estimates}\label{s:local}

In this section we refine the local estimates in \cite{pascucci2004ultraparabolic} and \cite{golse2016} for linear kinetic equations with rough coefficients. Essentially, we start from their results and apply scaling techniques to improve the local $L^\infty$ estimates.

We will need the following technical lemma. See \cite[Lemma 4.3]{hanlin} for the proof.
\begin{lemma}\label{l:eta}
Let $\eta(r)\geq 0$ be bounded in $[r_0,r_1]$ with $r_0\geq 0$. Suppose for $r_0\leq r<R\leq r_1$, we have
\[\eta(r) \leq \theta \eta(R) + \frac A {(R-r)^\alpha} + B\]
for some $\theta \in [0,1)$ and $A,B,\alpha \geq 0$. Then there exists $c(\alpha,\theta)>0$ such that for any $r_0\leq r<R\leq r_1$, there holds
\[\eta(r) \leq c(\alpha,\theta)\left(\frac A {(R-r)^\alpha} + B\right).\]
\end{lemma}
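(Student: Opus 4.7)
The plan is to prove this by the classical geometric iteration argument (the one carried out in Han--Lin, which the excerpt already cites). First I would fix $r_0 \leq r < R \leq r_1$ and build an increasing sequence $r = s_0 < s_1 < s_2 < \cdots$ converging to $R$, chosen so that consecutive gaps shrink geometrically. A convenient choice is
\[
s_n := r + (R-r)(1 - \tau^n), \qquad n \geq 0,
\]
for some $\tau \in (0,1)$ to be selected later. This gives $R - s_n = (R-r)\tau^n$ and $s_{n+1} - s_n = (R-r)(1-\tau)\tau^n$, so the gaps decay like $\tau^n$.

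Next I would apply the hypothesis on each pair $(s_n, s_{n+1})$ and iterate $N$ times starting from $n = 0$. This produces an estimate of the form
\[
\eta(r) \leq \theta^N \eta(s_N) + \sum_{n=0}^{N-1} \theta^n \left( \frac{A}{(R-r)^\alpha (1-\tau)^\alpha \tau^{n\alpha}} + B \right).
\]
The first term inside the sum contains the geometric series with ratio $\theta/\tau^\alpha$, and the second collapses to at most $B/(1-\theta)$.

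The key decision is to pick $\tau$ so that $\theta/\tau^\alpha < 1$; taking for instance $\tau^\alpha = (1+\theta)/2$ gives $\theta/\tau^\alpha = 2\theta/(1+\theta) < 1$, and the resulting series sums to a constant depending only on $\alpha$ and $\theta$. Since $\eta$ is assumed bounded on $[r_0,r_1]$, the remainder $\theta^N \eta(s_N)$ tends to $0$ as $N \to \infty$, so passing to the limit gives
\[
\eta(r) \leq c(\alpha,\theta) \left( \frac{A}{(R-r)^\alpha} + B \right),
\]
as claimed.

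There is no real obstacle: the argument is essentially algebraic. The two points requiring a bit of care are (i) the choice of $\tau$, which must be coupled to $\theta$ and $\alpha$ so that the geometric series converges, and (ii) the use of the boundedness assumption on $\eta$, which is exactly what lets us discard the $\theta^N \eta(s_N)$ tail in the limit. The edge case $\alpha = 0$ is trivial since the $A$-term does not depend on $R-r$ and a direct iteration of the hypothesis suffices.
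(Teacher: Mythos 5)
Your proof is correct and is exactly the standard geometric-iteration argument from Han--Lin, which is precisely what the paper cites (Lemma~4.3 of that reference) in place of giving a proof. The bookkeeping checks out: the choice $\tau^\alpha=(1+\theta)/2$ makes $\theta/\tau^\alpha<1$ so the geometric series converges, the $\theta^N\eta(s_N)$ tail vanishes by boundedness of $\eta$, and the degenerate case $\alpha=0$ is correctly handled by a direct iteration.
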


\begin{proposition}\label{p:harnack}
If $g(t,x,v)\geq 0$ is a weak solution of
\begin{equation}\label{e:FP}
\partial_t g + v\cdot \nabla_x g = \nabla_v \cdot(A\nabla_v g) + B \cdot\nabla_v g + s
\end{equation}
in $Q_1$,
with
\begin{align*}
0 < \lambda I \leq A(t,x,v) \leq \Lambda I, \qquad &(t,x,v)\in Q_1,\\
|B(t,x,v)| \leq \Lambda,\qquad &(t,x,v)\in Q_1,\\
s\in L^\infty(Q_1), \qquad &
\end{align*}
then
\begin{equation}\label{e:harnack}
\sup_{Q_{1/2}} g \leq C\left(\|g\|_{L^\infty_{t,x}L^1_v(Q_1)} + \|s\|_{L^\infty(Q_1)}\right),
\end{equation}
with $C$ depending only on $d, \lambda$, and $\Lambda$.
\end{proposition}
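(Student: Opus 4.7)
The plan is to start from a known local $L^\infty$–$L^p$ bound for kinetic Fokker–Planck equations with bounded measurable coefficients: the Moser iteration of \cite{pascucci2004ultraparabolic} (or the De Giorgi argument in \cite{golse2016}) yields, for some fixed $p > 1$ and some $\alpha > 0$ depending only on $d$, $\lambda$, $\Lambda$, and for all $1/2 \leq r < R \leq 1$,
\begin{equation*}
\sup_{Q_r} g \;\leq\; \frac{C}{(R-r)^{\alpha}} \|g\|_{L^p(Q_R)} + C\|s\|_{L^\infty(Q_R)}.
\end{equation*}
The goal is then simply to replace the $L^p(Q_R)$ norm on the right-hand side by the mixed norm $\|g\|_{L^\infty_{t,x}L^1_v(Q_1)}$.

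The key step is the elementary interpolation available because $g \geq 0$: writing $g^p = g^{p-1}\cdot g$ and taking the sup out of the integral,
\begin{equation*}
\|g\|_{L^p(Q_R)}^{p} = \int_{Q_R} g^{p-1}\, g \dd z \;\leq\; \bigl(\sup_{Q_R} g\bigr)^{p-1} \int_{Q_R} g \dd z \;\lesssim\; \bigl(\sup_{Q_R} g\bigr)^{p-1} \|g\|_{L^\infty_{t,x}L^1_v(Q_R)},
\end{equation*}
where the last inequality uses that the projection of $Q_R$ onto the $(t,x)$ variables has measure bounded uniformly for $R \in (1/2, 1]$. Substituting this into the $L^\infty$–$L^p$ estimate and applying Young's inequality to the product $(\sup_{Q_R} g)^{(p-1)/p} \|g\|_{L^\infty_{t,x}L^1_v}^{1/p}$ produces
\begin{equation*}
\sup_{Q_r} g \;\leq\; \tfrac{1}{2}\sup_{Q_R} g + \frac{C}{(R-r)^{\alpha p}} \|g\|_{L^\infty_{t,x}L^1_v(Q_1)} + C\|s\|_{L^\infty(Q_1)},
\end{equation*}
valid for all $1/2 \leq r < R \leq 1$. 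At this point Lemma \ref{l:eta} (with $\theta = 1/2$, applied to $\eta(r) := \sup_{Q_r} g$) absorbs the first term on the right and yields exactly \eqref{e:harnack}.

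The main obstacle is really bookkeeping rather than any new idea: one must verify that the literature statement used as the starting point is in the form $\|g\|_{L^p(Q_R)}$ with an explicit $(R-r)^{-\alpha}$ factor (rather than on fixed nested cylinders), and that the source term $s \in L^\infty$ enters additively with the correct dependence. If the only available reference gives the estimate with $p = 2$ on fixed concentric cylinders, one can recover the scaling in $(R-r)$ by the usual rescaling of the equation to the cylinder $Q_{R-r}$ shifted to the relevant point, using the invariance of the class of equations under the Galilean transformation $\mathcal S_{z_0}$ and the kinetic scaling built into $Q_r$. Once the interpolation identity above is in hand, the rest is a standard application of the iteration lemma.
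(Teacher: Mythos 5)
Your proposal is correct and follows essentially the same route as the paper: start from the $L^\infty$--$L^2$ estimate of \cite{golse2016}, interpolate that intermediate norm against $L^\infty$ to reach $L^\infty_{t,x}L^1_v$, absorb the $L^\infty$ piece by Young's inequality, recover the $(R-r)^{-\alpha}$ dependence by the rescaling and Galilean shifting you sketch at the end, and close with Lemma~\ref{l:eta}. The paper carries this out with $p=2$ and $\alpha = d/2$, being explicit where you are schematic: it sets $g_r(t,x,v) = g(r^2 t, r^3 x, rv)$, checks that the rescaled drift $rB_r$ stays bounded by $\Lambda$ for $r\leq 1$ so the hypothesis class is preserved, and applies the fixed-cylinder bound on $Q_{(1-\theta)R}(z)$ for each $z \in Q_{\theta R}$ to produce the $(R-r)^{-d}$ factor after Young.
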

\begin{proof}
It is proven in \cite{golse2016} that if $g(t,x,v)$ solves \eqref{e:FP} weakly with $A$, $B$, and $s$ as in the statement of the proposition, then
\begin{equation*}
\|g\|_{L^\infty(Q_{1/2})} \leq C \left(\|g\|_{L^2(Q_1)} + \|s\|_{L^\infty(Q_1)}\right),
\end{equation*}
with $C$ depending on $d, \lambda,$ and $\Lambda$. Since $\|g\|_{L^2(Q_1)} \leq \sqrt{\omega_d} \|g\|_{L_{t,x}^\infty L_v^2(Q_1)}$, where $\omega_d = \mathcal L^d(B_1)$, we also have
\begin{equation}\label{e:L2est}
\|g\|_{L^\infty(Q_{1/2})} \leq C \left(\|g\|_{L_{t,x}^\infty L_v^2(Q_1)} + \|s\|_{L^\infty(Q_1)}\right).
\end{equation}
To replace $\|g\|_{L_{t,x}^\infty L_v^2(Q_1)}$ with $\|g\|_{L_{t,x}^\infty L_v^1(Q_1)}$, we use an interpolation argument. For $0<r\leq 1$, define
\begin{equation}\label{e:rescale}
\begin{array}{ll}
g_r(t,x,v)  :=  g(r^2t, r^3x,rv), & s_r(t,x,v) := s(r^2t,r^3x,rv),\\
A_r(t,x,v)  :=  A(r^2t,r^3x, rv), & B_r(t,x,v) := B(r^2t,r^3x,rv),\end{array}
\end{equation}
and note that $g_r$ satisfies
\begin{equation}\label{e:gr}
\partial_t g_r + v\cdot \nabla_x g_r = \nabla_v \cdot(A_r\nabla_v g_r) + rB_r \cdot\nabla_v g_r + r^2s_r
\end{equation}
in $Q_1$. Since $r\leq 1$, we may apply \eqref{e:L2est} to $g_r$, which gives
\begin{equation}\label{e:Qr}
\|g\|_{L^\infty(Q_{r/2})} \leq C\left(\frac 1 {r^{d/2}}\|g\|_{L_{t,x}^\infty L_v^2(Q_r)} + r^2\|s\|_{L^\infty(Q_r)}\right),
\end{equation}
for any $r\in (0,1]$. Now, for $\theta,R\in (0,1)$, apply \eqref{e:Qr} in $Q_{(1-\theta)R}(z)$ for each $z\in Q_{\theta R}$ to obtain
\begin{align*}
\|g\|_{L^\infty(Q_{\theta R})} &\leq C\left(\frac 1 {[(1-\theta)R]^{d/2}} \|g\|_{L_{t,x}^\infty L_v^2(Q_R)} + R^2\|s\|_{L^\infty(Q_R)}\right)\\
&\leq C\left(\frac 1 {[(1-\theta)R]^{d/2}} \|g\|_{L_{t,x}^\infty L_v^2(Q_R)} + \|s\|_{L^\infty(Q_1)}\right).
\end{align*}
By the H\"older and Young inequalities, we have
\begin{align*}
\|g\|_{L^\infty(Q_{\theta R})} &\leq C\left(\frac 1 {[(1-\theta)R]^{d/2}} \|g\|_{L^\infty(Q_R)}^{1/2}\|g\|_{L_{t,x}^\infty L_v^1(Q_R)}^{1/2} + \|s\|_{L^\infty(Q_1)}\right)\\
&\leq \frac 1 2 \|g\|_{L^\infty(Q_R)} + C\left(\frac 1 {[(1-\theta)R]^{d}}\|g\|_{L_{t,x}^\infty L_v^1(Q_R)} + \|s\|_{L^\infty(Q_1)}\right).
\end{align*}
Define $\eta(\rho) = \|g\|_{L^\infty(Q_\rho)}$ for $\rho\in (0,1]$. Then for any $0<r<R\leq 1$, we have
\[\eta(r) \leq \frac 1 2 \eta(R) + \frac C {(R-r)^d} \|g\|_{L_{t,x}^\infty L_v^1(Q_1)} + C\|s\|_{L^\infty(Q_1)}.\]
Applying Lemma \ref{l:eta}, we obtain
\[\eta(r) \leq \frac C {(R-r)^d} \|g\|_{L_{t,x}^\infty L_v^1(Q_1)} + C\|s\|_{L^\infty(Q_1)}.\]
Let $R\to 1-$ and set $r=\frac 1 2$ to conclude \eqref{e:harnack}.
\end{proof}

\begin{lemma}\label{l:improved}
Let $g(t,x,v)$ solve \eqref{e:FP} weakly in $Q_R(z_0)$ for some $z_0\in \R^{2d+1}$ and $R>0$, with
\begin{align*}
0 < \lambda I \leq A(t,x,v) \leq \Lambda I, \qquad &(t,x,v)\in Q_R,\\
|B(t,x,v)| \leq \Lambda/R,\qquad &(t,x,v)\in Q_R,\\
s\in L^\infty(Q_R). \qquad &
\end{align*}
Then the improved estimate
\begin{equation}\label{e:improved}
g(t_0,x_0,v_0) \leq C\left(\|g\|_{L_{t,x}^\infty L_v^1(Q_R)}^{2/(d+2)} \|s\|_{L^\infty(Q_R)}^{d/(d+2)} + R^{-d}\|g\|_{L_{t,x}^\infty L_v^1(Q_R)}\right)
\end{equation}
holds, with $C$ depending only on $d, \lambda$, and $\Lambda$.
\end{lemma}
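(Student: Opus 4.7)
The plan is to deduce the lemma from Proposition \ref{p:harnack} by applying it on each subcylinder $Q_r(z_0)$ with $r\in(0,R]$ and then choosing $r$ to optimally balance the two resulting terms.

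First, I would reduce to $z_0=0$ using the Galilean symmetry $\mathcal S_{z_0}$ (which preserves the equation), and then, for each $r\in(0,R]$, perform the parabolic-kinetic rescaling
\[g_r(t,x,v) := g(r^2 t,\, r^3 x,\, r v),\]
together with the analogous rescalings of $A$, $B$, $s$ as in \eqref{e:rescale}. As already computed in \eqref{e:gr}, $g_r$ satisfies a Fokker--Planck equation on $Q_1$ with diffusion $A_r$, drift $rB_r$, and source $r^2 s_r$. The hypothesis $|B|\leq \Lambda/R$ together with $r\leq R$ guarantees $|rB_r|\leq \Lambda$, which is exactly what Proposition \ref{p:harnack} requires.

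Next, applying \eqref{e:harnack} to $g_r$ at the origin and unwinding the scaling (noting $\|g_r\|_{L^\infty_{t,x}L^1_v(Q_1)} = r^{-d}\|g\|_{L^\infty_{t,x}L^1_v(Q_r)}$ and $\|s_r\|_{L^\infty(Q_1)}=\|s\|_{L^\infty(Q_r)}$), I obtain
\[g(z_0) \leq C\!\left(r^{-d}\|g\|_{L^\infty_{t,x}L^1_v(Q_R)} + r^2\|s\|_{L^\infty(Q_R)}\right)\]
valid for every $r\in(0,R]$. Writing $N=\|g\|_{L^\infty_{t,x}L^1_v(Q_R)}$ and $S=\|s\|_{L^\infty(Q_R)}$, the right-hand side is a convex function of $r$ that is minimized (ignoring the constraint) at $r_\ast \approx (N/S)^{1/(d+2)}$, where its value is of order $N^{2/(d+2)}S^{d/(d+2)}$.

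Finally, I would split into two cases. If $r_\ast \leq R$, choosing $r=r_\ast$ directly yields the first term of \eqref{e:improved}. If $r_\ast > R$ (which is equivalent to $R^2 S \lesssim R^{-d}N$), the optimization is constrained; taking $r=R$ gives $R^{-d}N + R^2 S \lesssim R^{-d}N$, which is the second term of \eqref{e:improved}. Combining the cases yields the claimed bound. The only subtle point in the argument is the role of the hypothesis $|B|\leq \Lambda/R$: it is precisely what allows the rescaled drift to remain bounded by $\Lambda$ for \emph{every} admissible $r\leq R$, and thus what makes the interpolation step available at all scales.
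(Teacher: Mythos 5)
Your proposal is correct and follows essentially the same route as the paper's proof: reduce to the base point $z_0$, apply the kinetic rescaling $g_r(t,x,v)=g(r^2t,r^3x,rv)$ for $r\in(0,R]$, invoke Proposition~\ref{p:harnack} (noting that $|B|\leq\Lambda/R$ keeps the rescaled drift $rB_r$ within the admissible bound), and then optimize $r^{-d}N+r^2S$ over $r\leq R$. The only cosmetic difference is that the paper first normalizes to $R=1$ by a single change of variables and then compares $N$ with $S$, whereas you keep $R$ general and compare $r_\ast=(N/S)^{1/(d+2)}$ with $R$; these are equivalent formulations of the same case split.
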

\begin{proof}
By applying the change of variables
\begin{equation*}
(t,x,v) \mapsto \left( \frac {t-t_0}{R^2}, \frac{x-x_0 - (t-t_0)v_0}{R^3}, \frac{v-v_0} R\right)
\end{equation*}
to $g$ and $s$, we may suppose $(t_0,x_0,v_0) = (0,0,0)$ and $R=1$.

For $r\in(0,1]$ to be determined, we make the transformation \eqref{e:rescale} as in the proof of Proposition \ref{p:harnack} and get a function $g_r$ satisfying \eqref{e:gr} in $Q_1$. Then Proposition \ref{p:harnack} implies
\begin{align*}
g(0,0,0) &\leq C\left(\|g_r\|_{L^\infty_{t,x}L^1_v(Q_1)} + \|r^2s_r\|_{L^\infty(Q_1)}\right)\\
&= C\left(r^{-d}\|g\|_{L^\infty_{t,x}L^1_v(Q_r)} + r^2\|s\|_{L^\infty(Q_r)}\right)\\
&\leq C\left(r^{-d}\|g\|_{L^\infty_{t,x}L^1_v(Q_1)} + r^2\|s\|_{L^\infty(Q_1)}\right).
\end{align*}
If $\|g\|_{L_{t,x}^\infty L_v^1(Q_1)} \leq \|s\|_{L^\infty(Q_1)}$, then the choice $r = (\|g\|_{L^\infty_{t,x}L_v^1(Q_1)}/\|s\|_{L^\infty(Q_1)})^{1/(d+2)}$ implies
\[g(0,0,0) \leq C\|g\|_{L_{t,x}^\infty L_v^1(Q_1)}^{2/(d+2)} \|s\|_{L^\infty(Q_1)}^{d/(d+2)}.\]
On the other hand, if $\|s\|_{L^\infty(Q_1)}\leq \|g\|_{L_{t,x}^\infty L_v^1(Q_1)}$, the choice $r=1$ implies $g(0,0,0) \leq C\|g\|_{L_{t,x}^\infty L_v^1(Q_1)}$, so we have
\begin{equation*}
g(0,0,0) \leq C\left(\|g\|_{L_{t,x}^\infty L_v^1(Q_1)}^{2/(d+2)} \|s\|_{L^\infty(Q_1)}^{d/(d+2)} + \|g\|_{L_{t,x}^\infty L_v^1(Q_1)}\right)
\end{equation*}
in both cases.
\end{proof}

\section{Global estimates}\label{s:global}

In this section, we prove global upper bounds for solutions $f$ of \eqref{e:divergence}. Our bounds depend only on the estimates on the hydrodynamic quantities \eqref{e:M0}, \eqref{e:E0} and \eqref{e:H0}. Our bound does not depend on an upper bound of the initial data. We also get that the solution will have certain polynomial decay in $v$ for $t>0$.

From Lemma \ref{l:a}, we see that the bounds on $\bar a_{ij}(t,x,v)$ degenerate as $|v|\to\infty$. In the first lemma, we show how to change variables to obtain an equation with uniform ellipticity constants independent of $|v|$.
\begin{lemma}\label{l:T}
Let $z_0 =(t_0,x_0,v_0)\in \R_+\times \R^{2d}$ be such that $|v_0|\geq 2$, and let $T$ be the linear transformation such that
\begin{equation*}
T e = \begin{cases}   |v_0|^{1+\gamma/2} e , & e \cdot v_0 = 0\\
|v_0|^{\gamma/2}e, & e \cdot v_0 = |v_0|.\end{cases}
\end{equation*}
Let $\tilde T(t,x,v) = (t,Tx,Tv)$, and define
\begin{align*}
\mathcal T_{z_0}(t,x,v) &:= \mathcal S_{z_0} \circ \tilde T (t,x,v)\\
& = (t_0+t,x_0+T x + t v_0 ,v_0 + T v).
\end{align*}

Then,
\begin{enumerate}
\item[(a)] There exists a constant $C>0$ independent of $v_0\in\R^d\setminus B_2$ such that for all $v\in B_1$,
\[ C^{-1} |v_0| \leq |v_0 + Tv| \leq C |v_0|.\]
\item[(b)] If $f_T(t,x,v) := f(\mathcal T_{z_0}(t,x,v))$, then $f_T$ satisfies
\begin{equation}\label{e:isotropic}
\partial_t f_T + v \cdot \nabla_x f_T = \nabla_v \left[ A(z)\nabla_v f_T\right] +  B(z)\cdot \nabla_v f_T +  C(z) f_T
\end{equation}
in $Q_R$ for any $0<R<\min\{\sqrt{t_0},c_1 |v_0|^{-1-\gamma/2}\}$, where $c_1$ is a universal constant, and
\begin{align*}
\lambda I &\leq A(z) \leq \Lambda I,\\
|B(z)| &\lesssim \begin{cases} |v_0|^{1+\gamma/2}\left(1+\|f(t,x,\cdot)\|_{L^\infty(B_\rho(v))}\right)^{-(\gamma+1)/d}, &\gamma\in [-2,-1),\\[2ex]
|v_0|^{1+\gamma/2}, &\gamma\in [-1,0],\end{cases}\\
|C(v)| &\lesssim \begin{cases} |v_0|^\gamma\left(1+\|f(t,x,\cdot)\|_{L^\infty(B_\rho(v))}\right)^{-\gamma/d}, &\dfrac{-2d}{d+2}\leq \gamma < 0,\\
|v_0|^{-2-2\gamma/d}\left(1+\|f(t,x,\cdot)\|_{L^\infty(B_\rho(v))}\right)^{-\gamma/d}, &-2 < \gamma < \dfrac{-2d}{d+2},\end{cases}
\end{align*}
with $\lambda$ and $\Lambda$ universal, and $\rho \lesssim 1+ |v_0|^{-2/d}$. 
\end{enumerate}
\end{lemma}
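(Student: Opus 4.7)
My plan is to verify the two parts in sequence. Part (a) reduces to elementary arithmetic from the block structure of $T$. For part (b), the idea is to set up the transformed equation by composing the Galilean shift $\mathcal S_{z_0}$ with the linear pullback $\tilde T$, and then to estimate the resulting coefficients using the anisotropic bounds from Section~\ref{s:prelim}. The heart of the argument, and the reason the restriction on $R$ takes the precise form $R < c_1 |v_0|^{-1-\gamma/2}$, will be the uniform ellipticity of the transformed diffusion matrix.

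For (a), I decompose each $v \in B_1$ into components parallel and perpendicular to $v_0$. Since $\gamma \leq 0$ and $|v_0| \geq 2$, the parallel component of $Tv$ has norm at most $|v_0|^{\gamma/2} \leq 1$, while the perpendicular component has norm at most $|v_0|^{1+\gamma/2} \leq |v_0|$. Adding $v_0$ then yields the two-sided bound $|v_0|/2 \leq |v_0 + Tv| \leq (1+\sqrt{2})|v_0|$.

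For (b), the Galilean shift $\mathcal S_{z_0}$ is a standard symmetry of \eqref{e:divergence}, so $f \circ \mathcal S_{z_0}$ satisfies the same equation with coefficients pulled back to $\mathcal S_{z_0}(t,x,v)$. A direct chain-rule calculation, using that $T$ is symmetric and that applying the same $T$ to both $x$ and $v$ preserves the transport term $v \cdot \nabla_x$, shows that the further pullback by $\tilde T$ converts the divergence-form diffusion matrix from $\bar a$ to $T^{-1} \bar a\, T^{-1}$ and the drift vector from $\bar b$ to $T^{-1}\bar b$, while $\bar c$ is unchanged. Thus $f_T$ satisfies \eqref{e:isotropic} with $A = T^{-1}\bar a\, T^{-1}$, $B = T^{-1}\bar b$, $C = \bar c$, each evaluated at $\mathcal T_{z_0}(z)$.

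For the coefficient estimates on $Q_R$, the condition $R < c_1 |v_0|^{-1-\gamma/2}$ combined with $\|T\|_{\mathrm{op}} = |v_0|^{1+\gamma/2}$ gives $|Tv| \leq c_1$ and, more importantly, forces the angle $\theta$ between $v' := v_0 + Tv$ and $v_0$ to satisfy $\sin\theta \lesssim c_1 |v_0|^{-1}$. The bound on $C$ then follows directly from Lemma~\ref{l:c} applied at $v'$, and the bound on $B$ combines Lemma~\ref{l:b} with $\|T^{-1}\|_{\mathrm{op}} = |v_0|^{-\gamma/2}$; in both cases the small balls $B_{|v'|^{-2/d}}(v')$ appearing in those lemmas sit inside $B_\rho(v_0)$ for some $\rho \lesssim 1 + |v_0|^{-2/d}$, matching the $\rho$ advertised in the statement. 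The main obstacle is the ellipticity of $A$: the anisotropic frame of $\bar a(v')$ is keyed to $v'$ while that of $T$ is keyed to $v_0$. Rewriting \eqref{e:aij-lower}--\eqref{e:aij-upper} as $\bar a(v') \approx |v_0|^\gamma P_\parallel^{v'} + |v_0|^{\gamma+2} P_\perp^{v'}$ in the quadratic-form sense and expanding in the $v_0$-frame introduces cross terms of order $|v_0|^{\gamma+2}\sin^2\theta$ and $|v_0|^\gamma\sin^2\theta$, which are absorbable into the diagonal terms precisely when $\sin\theta \lesssim |v_0|^{-1}$. Conjugating the resulting $|v_0|^\gamma P_\parallel^{v_0} + |v_0|^{\gamma+2} P_\perp^{v_0}$ by $T^{-1} = |v_0|^{-\gamma/2} P_\parallel^{v_0} + |v_0|^{-1-\gamma/2} P_\perp^{v_0}$ then yields $A(z) \approx I$ with universal constants. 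It is precisely this absorption condition that dictates the scaling $R \lesssim |v_0|^{-1-\gamma/2}$ appearing in the hypothesis.
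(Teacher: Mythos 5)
Your proposal is substantially correct and follows the same essential route as the paper: compute $A = T^{-1}\bar a\, T^{-1}$, $B = T^{-1}\bar b$, $C = \bar c$, then establish ellipticity of $A$ using the anisotropic bounds of Lemma~\ref{l:a} and the frame-mismatch control coming from $|Tv|\lesssim 1$, and finally read off the $B,C$ bounds from Lemmas~\ref{l:b}, \ref{l:c} together with $\|T^{-1}\|=|v_0|^{-\gamma/2}$. Two points of presentation differ and are worth flagging. First, for part~(a), your claimed two-sided constants $[\tfrac12,\,1+\sqrt2]$ are not uniform over $\gamma\in(-2,0)$ at $|v_0|=2$ (as $\gamma\to 0^-$ one has $|Tv|\approx|v_0|^{1+\gamma/2}\to|v_0|$ and the lower bound degenerates); the correct statement, as in the paper, is that $C^{-1}=1-2^{\gamma/2}$ works, so the constant depends on $\gamma$ but not on $v_0$. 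Second, and more importantly: the step where you ``rewrite'' \eqref{e:aij-lower}--\eqref{e:aij-upper} as $\bar a(v')\approx|v_0|^\gamma P_\parallel^{v'}+|v_0|^{\gamma+2}P_\perp^{v'}$ in the quadratic-form sense is true, but it is not a mere rewriting. The four bounds in Lemma~\ref{l:a} give upper/lower bounds on the quadratic form along specific directions, but a priori say nothing directly about the cross entry $\bar a e_\parallel\cdot e_\perp$; by na\"ive Cauchy--Schwarz this entry can be as large as $|v_0|^{\gamma+1}$, which is comparable to the geometric mean of the diagonal entries and cannot simply be absorbed. Establishing the two-sided diagonal equivalence requires exploiting the strict isotropic lower bound $\bar a\geq c|v_0|^\gamma I$: one forms the nonnegative form $\bar a - c|v_0|^\gamma I$ and applies Cauchy--Schwarz to it, which sharpens the off-diagonal bound to $|\bar a e_\parallel\cdot\tilde w|\leq\sqrt{(\bar a e_\parallel\cdot e_\parallel-c|v_0|^\gamma)(\bar a\tilde w\cdot\tilde w-c|v_0|^\gamma|\tilde w|^2)}$; only then does the coercivity survive. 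This is precisely what the paper's ``subtract the two inequalities'' manoeuvre accomplishes, working directly in the $v_0$-frame rather than first in the $v'$-frame. Your reorganization (diagonalize at $v'$, then rotate to the $v_0$-frame, then conjugate by $T^{-1}$) is valid, and the condition $\sin\theta\lesssim|v_0|^{-1}$ for absorbing the frame-change errors is the right one — though the dominant off-diagonal error is of size $|v_0|^{\gamma+2}\sin\theta\cos\theta\sim|v_0|^{\gamma+1}$, not $|v_0|^{\gamma+2}\sin^2\theta$; the $\sin^2\theta$ terms are the diagonal corrections, which also force $\sin\theta\lesssim|v_0|^{-1}$. Just make sure, if you flesh this out, to supply the non-trivial coercivity argument underlying the diagonal equivalence rather than treating it as immediate.
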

\begin{proof}
Since $|v|\leq 1$ and $|v_0| > 2$,
\[|v_0| - |v_0|^{1+\gamma/2} \leq |v_0| - |Tv| \leq |v_0 + Tv| \leq |v_0| + |Tv| \leq |v_0| + |v_0|^{1+\gamma/2}.\]
 Thus, (a) follows since $\gamma \in (-2,0)$.

For (b), by direct computation, $f_T$ satisfies \eqref{e:isotropic} with
\[A(z) = T^{-1}\bar a(\mathcal T_{z_0}(z)) T^{-1}, \quad B(z) = T^{-1}\bar b(\mathcal T_{z_0}(z)), \quad C(z) = \bar c(\mathcal T_{z_0}(z)).\]
In order to keep the proof clean, let us write $\bar a_{ij}$ and $A_{ij}$ instead of $\bar a_{ij}(\mathcal T_{z_0}(z))$ and $A_{ij}(z)$ for the rest of the proof.

Fix $z = (t,x,v)\in Q_R$, and let $\tilde v = v_0+T v$. From part (a), we know that $|\tilde v| \approx |v_0|$. Applying Lemma \ref{l:a}, we have that for any unit vector $e$,
\begin{equation}\label{e:aijT}
\bar a_{ij} e_ie_j \lesssim \begin{cases}(1+|v_0|)^{\gamma}, &e = \tilde v / |\tilde v|,\\
(1+|v_0|)^{\gamma+2}, & e \in S^{d-1}. \end{cases}
\end{equation}
and,
\begin{equation}\label{e:aijT2}
\bar a_{ij} e_ie_j \gtrsim \begin{cases}(1+|v_0|)^{\gamma}, &e \in S^{d-1},\\
(1+|v_0|)^{\gamma+2}, & e \cdot \tilde v = 0. \end{cases}
\end{equation}

Our first step is to verify that we can switch $\tilde v$ for $v_0$ in \eqref{e:aijT} and \eqref{e:aijT2}.

Let us start with \eqref{e:aijT}. This is where the assumption $|v| < R \leq C_1 |v_0|^{-1-\gamma/2}$ plays a role. We can choose $c_1$ so as to ensure that $|Tv| \leq 1$. Since $v_0 = \tilde v - Tv$ and using the fact that $\bar a_{ij}$ is positive definite,
\[ \bar a_{ij} (v_0)_i (v_0)_j \leq 2 \bar a_{ij} \tilde v_i \tilde v_j + 2 \bar a_{ij}(Tv)_i (Tv)_j \leq C |v_0|^{2+\gamma}.\]
Let $e_0 = v_0 / |v_0|$. The computation above tells us that $\bar a_{ij} (e_0)_i (e_0)_j \lesssim |v_0|^\gamma$.

Let us now turn to \eqref{e:aijT2}. We will show that
\begin{equation} \label{e:aijT2v0}
 \bar a_{ij} w_iw_j \gtrsim (1+|v_0|)^{\gamma+2} |w|^2 \qquad \text{if } w \cdot v_0 = 0.
\end{equation}
Note that $(1+|v_0|)^{2+\gamma}$ and $(1+|v_0|)^\gamma$ are comparable when $|v_0|$ is small, so we only need to verify \eqref{e:aijT2v0} for $w \cdot v_0 = 0$ and $|v_0|$ arbitrarily large. For such vector $w$, we write $w= \eta \tilde v + w'$ with $w'\cdot \tilde v = 0$. Since $|\tilde v - v_0| = |Tv| \leq 1$, we have $|\eta| = |w\cdot \tilde v| / |\tilde v|^2 = |w\cdot(\tilde v - v_0)|/|\tilde v|^2 \leq |w| |\tilde v|^{-2}$. Moreover, $|w'| \approx |w|$.

Since $\bar a_{ij}$ is positive definite,
\[\bar a_{ij} (\sqrt 2 \eta \tilde v - w'/\sqrt 2)_i (\sqrt 2 \eta \tilde v - w'/ \sqrt 2)_j \geq 0,\]
then we have
\begin{align*}
\bar a_{ij} w_i w_j &\geq \frac 1 2 \bar a_{ij} w'_iw'_j - \eta^2\bar a_{ij} \tilde v_i \tilde v_j\\
&\geq \left( c(1+|v_0|)^{\gamma+2} - (1+|v_0|)^{\gamma} \right)|w|^2 \gtrsim (1+|v_0|)^{\gamma+2}  |w|^2,
\end{align*}
as desired.


Let $w \in \R^d$ be arbitrary. We will estimate $A_{ij} w_i w_j$ from above. Writing $w = \mu e_0 + \tilde w$, with $\tilde w \cdot e = 0$.
\begin{align*}
A_{ij} w_i w_j &= |v_0|^{-\gamma} \left( \mu^2 \bar a_{ij} (e_0)_i (e_0)_j + 2 \mu |v_0|^{-1} \bar a_{ij} (e_0)_i \tilde w_j + |v_0|^{-2} \bar a_{ij} \tilde w_i \tilde w_j \right),\\
\intertext{and using that $\bar a_{ij}$ is positive definite,}
A_{ij}w_iw_j &\leq 2|v_0|^{-\gamma} \left( \mu^2 \bar a_{ij} (e_0)_i (e_0)_j + |v_0|^{-2} \bar a_{ij} \tilde w_i \tilde w_j \right),\nonumber\\
&\leq C \left( \mu^2 + |\tilde w|^2 \right) =: \Lambda |w|^2.\nonumber
\end{align*}
This establishes upper bound $\{A_{ij}\} \leq \Lambda I$ for some $\Lambda>0$.

Now we will prove the lower bound for $A_{ij}$. Again, we write $w = \mu e_0 + \tilde w$ with $e_0 \cdot \tilde w = 0$. We need to analyze the quadratic form associated with the coefficients $\bar a_{ij}$ more closely. From \eqref{e:aijT2}, we have that for some universal constant $c>0$,
\[ c |v_0|^\gamma (\mu^2 + |\tilde w|^2) \leq \bar a_{ij} w_i w_j = \mu^2 \bar a_{ij} (e_0)_i (e_0)_j + 2 \mu \bar a_{ij} (e_0)_i \tilde w_j + \bar a_{ij} \tilde w_i \tilde w_j. \]
Moreover, \eqref{e:aijT} implies that there is a universal constant $\delta > 0$ so that
\[ c |v_0|^\gamma (\mu^2 + |\tilde w|^2) \geq \delta \mu^2 \bar a_{ij} (e_0)_i (e_0)_j + \delta |v_0|^{-2} \bar a_{ij} \tilde w_i \tilde w_j. \]
Subtracting the two inequalities above,
\[ (1-\delta) \mu^2 \bar a_{ij} (e_0)_i (e_0)_j + 2 \mu \bar a_{ij} (e_0)_i \tilde w_j + (1-\delta |v_0|^{-2}) \bar a_{ij} \tilde w_i \tilde w_j \geq 0.\]
The same inequality holds if we replace $w = \mu e_0 + \tilde w$ with $w = (1-\delta/2)^{-1/2} \mu e_0 + (1-\delta/2)^{1/2}|v_0|^{-1} \tilde w$, therefore
\[ \frac{1-\delta}{1-\delta/2} \mu^2 \bar a_{ij} (e_0)_i (e_0)_j + 2 \mu |v_0|^{-1} \bar a_{ij} (e_0)_i \tilde w_j + (1-\delta/2)(1-\delta |v_0|^{-2}) |v_0|^{-2} \bar a_{ij} \tilde w_i \tilde w_j \geq 0.\]
Recalling the formula above for $A_{ij} w_i w_j$, and replacing it in the left hand side, we get
\[ A_{ij} w_i w_j - \left( 1 - \frac{1-\delta}{1-\delta/2} \right) |v_0|^{-\gamma} \mu^2 \bar a_{ij} (e_0)_i (e_0)_j - \left( 1 - (1-\delta/2)(1-\delta |v_0|^{-2}) \right) |v_0|^{-2-\gamma} \bar a_{ij} \tilde w_i \tilde w_j \geq 0.\]
Therefore, using \eqref{e:aijT2} and \eqref{e:aijT2v0},
\begin{align*}
 A_{ij} w_i w_j &\geq \left( 1 - \frac{1-\delta}{1-\delta/2} \right) |v_0|^{-\gamma} \mu^2 \bar a_{ij} (e_0)_i (e_0)_j + \left( 1 - (1-\delta/2)(1-\delta |v_0|^{-2}) \right) |v_0|^{-2-\gamma} \bar a_{ij} \tilde w_i \tilde w_j, \\
&\geq \lambda (\mu^2 + |\tilde w|^2),
\end{align*}
for some universal constant $\lambda > 0$. This establishes the lower bound $\{A_{ij}\} \geq \lambda I$.

\commentout{
\medskip
\medskip
\medskip
\medskip

We now turn to the lower bound for

Next, \eqref{e:aijT} and \eqref{e:aijT2} imply that there is a unit eigenvector $\bar e$ of $\bar a_{ij}$ with eigenvalue $\bar \lambda \approx (1+|v_0|)^\gamma$. For $|v_0|$ sufficiently large, the angle between $e_0$ and $\bar e$ must be small. Indeed, writing $e_0 = (e_0\cdot \bar e)\bar e + e^{\perp}$, we have $\bar a_{ij}(e_0)_i(e_0)_j = (e_0\cdot\bar e)^2 \bar \lambda + \bar a_{ij} e^{\perp}_i e^{\perp}_j$, so that \eqref{e:aijT} and \eqref{e:aijT2} imply $|e^{\perp}|^2 \lesssim |v_0|^{-2}$, and therefore, $|e_0 - \bar e|\lesssim |v_0|^{-1}$. Similarly, for $\tilde w$ perpendicular to $e_0$, we can write $\tilde w = (\tilde w\cdot \bar e)\bar e + \bar w$ with $|\tilde w - \bar w| \lesssim |\tilde w| |v_0|^{-1}$. Now, with $w\in\R^d$ and $w = \mu e_0 + \tilde w$ as above, we have
\begin{align*}
\bar a_{ij}(e_0)_i \tilde w_j &= \bar a_{ij}\bar e_i (\tilde w - \bar w)_j + \bar a_{ij} (e_0 - \bar e)_i \tilde w_j\\
&\geq  - \bar \lambda |\tilde w - \bar w| - |\bar a (e_0-\bar e)| |\tilde w|\\
&\geq -c(1+|v_0|)^\gamma |v_0|^{-1} |\tilde w|,
\end{align*}
for some constant $c$. Then, if $|v_0|\geq \rho_0$ large enough, \eqref{e:Aijw} implies
\begin{align*}
A_{ij} w_i w_j &\geq |v_0|^{-\gamma}\left( \mu^2 |v_0|^{\gamma} - 2c\mu |v_0|^{-2+\gamma}|\tilde w| + |v_0|^{\gamma}|\tilde w|^2\right)\\
&\geq \mu^2 + |\tilde w|^2 = |w|^2.
\end{align*}

\medskip
\medskip
\medskip
\medskip
\medskip
}

To derive the bound on $B(z)$, Lemma \ref{l:b} and conclusion (a) imply
\begin{align*}
|B(z)| &\lesssim \|T^{-1}\| |\bar b(\mathcal T_{z_0}(z))|\\
&\lesssim \begin{cases} (1+|v_0|)^{\gamma/2+1}(1+\|f\|_{L^\infty(B_{\rho'}(\tilde v))})^{-(\gamma+1)/d}, &\gamma \in (-2,-1),\\
(1+|v_0|)^{\gamma/2+1}, &\gamma \in [-1,0],\end{cases}
\end{align*}
where $\rho' = |\tilde v|^{-2/d}$. From the triangle inequality, we have that $B_{\rho'}(\tilde v) \subset B_\rho(v_0)$, with $\rho \lesssim (1+|v_0|)^{-2/d}+R(1+|v_0|)^{(\gamma+2)/2} \leq 1+(1+|v_0|)^{-2/d}$.
The bound on $C(z)$ follows in a similar manner, using Lemma \ref{l:c}.
\end{proof}

The key lemma in the proof of Theorem \ref{t:decay-generation} is the following pointwise estimate on $f$:
\begin{lemma}\label{l:gain}
Let $\gamma\in (-2,0]$, $T_0>0$, and let $f:[0,T_0]\times \R^{2d}\to\R_+$ solve the Landau equation \eqref{e:divergence} weakly. If
\[f(t,x,v) \leq K (1+t^{-d/2})(1+|v|)^{-\alpha}\]
in $[0,T_0]\times \R^{2d}$ for some $\alpha\in [0,1]$ and $K\geq 1$, then
\begin{equation}\label{e:pointwise}
f(t,x,v) \leq C\left((K (1+t^{-d/2}))^{(d-\gamma)/(d+2)} (1+|v|)^{P(d,\alpha,\gamma)} + K^{Q(\gamma)}(1+ t^{-d/2}) (1+|v|)^{-1}\right),
\end{equation}
for some $C$ universal and
\[P(d,\alpha,\gamma) = \begin{cases} -1 - d(1+\alpha)/(d+2), &\gamma\in \left[\dfrac{-2d}{d+2}, 0\right],\\
-[d(4+\gamma)+2 +2\gamma + \alpha d]/(d+2), &\gamma\in \left(-2, \dfrac{-2d}{d+2} \right),\end{cases}\]
\[Q(\gamma) = \begin{cases} 0, &\gamma \in [-1,0]\\
-(1+\gamma), &\gamma\in (-2,-1).\end{cases}\]
\end{lemma}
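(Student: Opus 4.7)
The proof proceeds by dichotomy on $|v_0|$. For $(t_0,x_0,v_0)$ with $|v_0|\le 2$, I would apply Lemma \ref{l:improved} directly to $f$ on the cylinder $Q_R(z_0)$ with $R = \min(\sqrt{t_0/2},1)$, noting that the coefficients from Lemmas \ref{l:a}--\ref{l:b} are bounded in this regime once the pointwise hypothesis is used to control $\|f\|_{L^\infty(B_\rho(v_0))}$. Writing $F := K(1+t_0^{-d/2})(1+|v_0|)^{-\alpha}$, the source $s = \bar c f$ satisfies $\|s\|_\infty \lesssim F^{(d-\gamma)/d}$, and $\|f\|_{L^1_v}\le M_0$, so the conclusion follows since $(1+|v_0|)$ is of order one.

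The heart of the proof is the case $|v_0|>2$. I would apply the change of variables $\mathcal T_{z_0}$ from Lemma \ref{l:T} and work with $f_T = f\circ\mathcal T_{z_0}$, which satisfies the uniformly elliptic equation \eqref{e:isotropic} on a cylinder $Q_R$. The radius $R$ must be chosen small enough that $|B|\le \Lambda/R$, as required by Lemma \ref{l:improved}. For $\gamma\in[-1,0]$ this is automatic with $R = c_1|v_0|^{-1-\gamma/2}$, while for $\gamma\in(-2,-1)$ the extra $(1+\|f\|_{L^\infty})^{-(\gamma+1)/d}\lesssim (1+F)^{-(\gamma+1)/d}$ factor in $|B|$ forces the smaller choice $R = c_1|v_0|^{-1-\gamma/2}(1+F)^{(\gamma+1)/d}$. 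In either case $R$ is also clipped by $\sqrt{t_0/2}$ so that $Q_R\subset[0,T_0]\times\R^{2d}$ and $(1+(t_0+t)^{-d/2})\approx(1+t_0^{-d/2})$ throughout.

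The crucial estimate is the $L^\infty_{t,x}L^1_v$ norm of $f_T$: rather than using mass, I would exploit the energy density. Setting $\tilde v = v_0 + Tv$ and noting that $\mathcal E_R := v_0 + T(B_R)\subset\{|\tilde v|\ge |v_0|/2\}$, one gets $\int_{\mathcal E_R} f\,\dd\tilde v \le \frac{4}{|v_0|^2}\int f|\tilde v|^2\,\dd\tilde v \le 4E_0|v_0|^{-2}$, and hence
\[ \|f_T\|_{L^\infty_{t,x}L^1_v(Q_R)} \lesssim E_0\,\det(T)^{-1}|v_0|^{-2} = E_0\,|v_0|^{-(d+1)-d\gamma/2}. \]
Combined with $\|s\|_\infty = \|Cf_T\|_\infty$ bounded by Lemma \ref{l:T}'s estimate on $|C|$ (and $\|f_T\|_\infty\le F$), Lemma \ref{l:improved} yields
\[ f(z_0) \lesssim \|f_T\|_{L^\infty_{t,x}L^1_v}^{2/(d+2)}\|s\|_\infty^{d/(d+2)} + R^{-d}\|f_T\|_{L^\infty_{t,x}L^1_v}. \]
Direct computation then shows the first term equals $(K(1+t_0^{-d/2}))^{(d-\gamma)/(d+2)}(1+|v_0|)^{P(d,\alpha,\gamma)}$, with the two cases of $P$ matching the two regimes in which Lemma \ref{l:T}'s bound on $|C|$ has the form $|v_0|^\gamma$ or $|v_0|^{-2-2\gamma/d}$. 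The second term gives $E_0|v_0|^{-1}$ for $\gamma\in[-1,0]$; for $\gamma\in(-2,-1)$ the smaller $R$ inflates it by $(1+F)^{-(\gamma+1)}$, and using $K\ge 1$, $1+t_0^{-d/2}\ge 1$, together with $-(\gamma+1)\in(0,1)$, one bounds $(1+F)^{-(\gamma+1)}\lesssim K^{-(\gamma+1)}(1+t_0^{-d/2})^{-(\gamma+1)}\le K^{Q(\gamma)}(1+t_0^{-d/2})$.

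The principal obstacle is the exponent bookkeeping in the final display: verifying that the $|v_0|$ powers arising from $\det(T)^{-1}$, $R^{-d}$, $|v_0|^\gamma$ (respectively $|v_0|^{-2-2\gamma/d}$) and $F^{(d-\gamma)/(d+2)}$ precisely combine to give $P(d,\alpha,\gamma)$ in each regime. A secondary technicality is that when $t_0$ is small enough that $R = \sqrt{t_0/2}$, some of the exponent identities change form, but the estimate is absorbed because $1+t_0^{-d/2}$ is then large; and one must also verify that the choice $R = c_1|v_0|^{-1-\gamma/2}(1+F)^{(\gamma+1)/d}$ in the $\gamma<-1$ case is compatible with the constraint $R\le c_1|v_0|^{-1-\gamma/2}$ needed for Lemma \ref{l:T}, which it is since $(1+F)^{(\gamma+1)/d}\le 1$.
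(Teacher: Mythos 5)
Your proposal is correct and follows essentially the same route as the paper's proof: change variables by $\mathcal T_{z_0}$ from Lemma~\ref{l:T} for $|v_0|\ge 2$, use the energy bound to gain a factor $|v_0|^{-2}$ in $\|f_T\|_{L^\infty_{t,x}L^1_v}$, feed the hypothesis into the coefficient bounds of Lemma~\ref{l:T}(b), and apply Lemma~\ref{l:improved}, treating $|v_0|\le 2$ by applying Lemma~\ref{l:improved} to $f$ directly. The only cosmetic difference is your choice $R = c_1|v_0|^{-1-\gamma/2}(1+F)^{(\gamma+1)/d}$ (clipped by $\sqrt{t_0/2}$) for $\gamma\in(-2,-1)$, versus the paper's $R = K^{(1+\gamma)/d}(r_0/2)(1+|v_0|)^{-(2+\gamma)/2}$ with $r_0=\min\{1,\sqrt{t_0}\}$; as you note, both satisfy the constraints and lead to the same exponents.
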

\begin{proof}
\emph{Case 1: $\gamma \in [-1,0]$.} 
Let $z_0 = (t_0,x_0,v_0)$ be such that such that $|v_0|\geq 2$. Define $r_0 = \min\{1,\sqrt{t_0}\}$, and note that $r_0^{-d} \approx (1+t_0^{-d/2})$. Letting $f_T$ be as in Lemma \ref{l:T}, we will estimate $f_T(t,x,v)$ in $Q_R$, where
\[R := c_1(r_0/2)(1+|v_0|)^{-(2+\gamma)/2},\]
with $c_1$ as in Lemma \ref{l:T}(b). We have that $f_T$ solves \eqref{e:isotropic} in $Q_{R}$, and by Lemma \ref{l:T}(a) and our assumption on $f$,
\begin{equation}\label{e:QRbound}
f_T(t,x,v) \lesssim K r_0^{-d} (1+|v_0|)^{-\alpha}
\end{equation}
in $Q_R$. Feeding \eqref{e:QRbound} into Lemma \ref{l:T}(b), we have
\begin{align}
0<&\lambda I \leq  A(z) \leq \Lambda I,\nonumber\\
|B(z)|&\lesssim (1+|v_0|)^{(2+\gamma)/2},\label{e:Bbound}\\
|C(z)| &\lesssim   \left(Kr_0^{-d}\right)^{-\gamma/d}(1+|v_0|)^{\gamma},\label{e:Cbound}
\end{align}
in $Q_R$.




Let $Q_{T,R}$ be the image of $Q_R$ under $z \mapsto \mathcal T_{z_0}(z)$, and note that
\begin{align}
\| f_T \|_{L_{t,x}^\infty L_v^1(Q_R)} &= \det(T^{-1})\|f\|_{L_{t,x}^\infty L_v^1(Q_{T,R})}\nonumber\\
& = (1+|v_0|)^{-[(d-1)(2+\gamma)/2 + \gamma/2]}\|f\|_{L_{t,x}^\infty L_v^1(Q_{T,R})}\nonumber\\
&\leq (1+|v_0|)^{-\left(1+d(2+\gamma)/2\right)}E_0,\label{e:L1bound}
\end{align}
where the last inequality comes from the energy bound \eqref{e:E0} and Lemma \ref{l:T}(a).

By \eqref{e:Bbound} and our choice of $R$, we can apply Lemma \ref{l:improved} in $Q_R$ with $g = f_T$ and $s = C(z) f_T$ to obtain
\begin{align}\label{e:estimate-small-gamma}
f(t_0,x_0,v_0) &\leq C\left(\|f_T\|_{L_{t,x}^\infty L_v^1(Q_R)}^{2/(d+2)} \| C(z) f_T\|_{L^\infty(Q_R)}^{d/(d+2)}+ r_0^{-d}(1+|v_0|)^{d(2+\gamma)/2}\|f_T\|_{L_{t,x}^\infty L_v^1(Q_R)}\right)\nonumber\\
&\leq C\left((Kr_0^{-d})^{(d-\gamma)/(d+2)} (1+|v_0|)^{-1-d(1+\alpha)/(d+2)} + r_0^{-d}(1+|v_0|)^{-1}\right),
\end{align}
using \eqref{e:QRbound}, \eqref{e:Cbound}, and \eqref{e:L1bound}. Note that we derived \eqref{e:estimate-small-gamma} assuming that $|v_0|\geq 2$. When $|v_0|\leq 2$, the matrix $\bar a_{ij}(z)$ is uniformly elliptic and we can apply Lemma \ref{l:improved} directly to $f$ to obtain \eqref{e:estimate-small-gamma} in this case as well.

\emph{Case 2: $\gamma \in (-2, -1]$.} The argument is the same as in Case 1, but the estimates are quantitatively different as a result of the different bounds on $B(z)$ and $C(z)$ in Lemma \ref{l:T}. The changes are as follows: the radius $R$ of the cylinder $Q_R$ is chosen to be
\[R := K^{(1+\gamma)/d}(r_0/2)(1+|v_0|)^{-(2+\gamma)/2},\]
the bound on $B(z)$ becomes
\[|B(z)|\lesssim  K^{-(1+\gamma)/d} r_0^{1+\gamma}(1+|v_0|)^{(2+\gamma)/2}\leq \Lambda/R, \quad z\in Q_R,\]
and for $C(z)$ we have
\begin{equation*}
|C(z)| \lesssim  \begin{cases} \left(Kr_0^{-d}\right)^{-\gamma/d}(1+|v_0|)^{\gamma}, &\gamma\in \left[\dfrac{-2d}{d+2}, -1\right],\\
\left(Kr_0^{-d}\right)^{-\gamma/d}(1+|v_0|)^{-2-2\gamma/d}, &\gamma\in \left(-2, \dfrac{-2d}{d+2} \right),\end{cases}
\end{equation*}
for $z\in Q_R$. After applying Lemma \ref{l:improved} and \eqref{e:L1bound}, we obtain
\begin{equation*}
f(t_0,x_0,v_0) \leq C\left((K r_0^{-d})^{(d-\gamma)/(d+2)} (1+|v_0|)^{P(d,\alpha,\gamma)} + K^{-(1+\gamma)} r_0^{-d} (1+|v_0|)^{-1}\right),
\end{equation*}
as desired, with $P(d,\alpha,\gamma)$ as in the statement of the lemma.
\end{proof}

We are now in a position to prove our main theorem.

\begin{proof}[Proof of Theorem \ref{t:decay-generation}]
Define
\[K:= \sup\limits_{(0,T_0]\times \R^{2d}} \min\{t^{d/2},1\}f(t,x,v). \]
First, we will show that $K\leq K_*$, where $K_*$ is universal. We can assume $K > 1$. For each $\gamma\in (-2, 0]$, define $p_\gamma: (1,\infty)\to \R$ by
\begin{equation} \label{e:p-equation}
p_\gamma\left(\overline{K}\right) = \begin{cases} C\left(\overline{K}^{(d-\gamma)/(d+2)} + 1\right), & \gamma\in (-1, 0], \\
C\left(\overline{K}^{(d-\gamma)/(d+2)} + (\overline{K})^{-(1+\gamma)}\right), & \gamma\in \left(-2, -1\right], 
\end{cases}
\end{equation}
where $C$ is the appropriate constant from Lemma \ref{l:gain} for each $\gamma$.  Then since $-(1+\gamma)<1$ and $\displaystyle\frac{d-\gamma}{d+2}<1$ for $\gamma>-2$, there is a $K_*>1$ such that
\begin{align*}
K_* &= p_\gamma(K_*), \\
\overline{K}&>p_\gamma(\overline{K}), \quad \mbox{ if }\, \overline{K}>K_*.
\end{align*}
Let $\eps>0$. By the definition of $K$, there exists some $(t_0,x_0,v_0)\in (0,T]\times \R^{2d}$ such that $f(t_0,x_0,v_0)> (K-\eps)\max\{t_0^{-d/2}, 1\}$.  Therefore, Lemma \ref{l:gain} implies that
\[K-\eps \leq p_\gamma(K).\]
Since this is true for all $\eps>0$, we have that $K\leq K_*$.

If $\gamma \in \left[\frac{-2d}{d+2},0\right]$, we apply Lemma \ref{l:gain} with $\alpha=0$ to conclude \eqref{e:decay} with $K_0 =C K_*$. If $\gamma \in \left(-2,\frac{-2d}{d+2}\right)$, Lemma \ref{l:gain} with $\alpha=0$ implies
\[f(t,x,v) \leq C K \left(1+t_0^{-d/2}\right) (1+|v_0|)^{-[d(4+\gamma) + 2 + 2\gamma]/(d+2)},\]
so we can apply Lemma \ref{l:gain} again with $\alpha = [d(4+\gamma) + 2 + 2\gamma]/(d+2)$. We iterate this step, and since for any $\alpha\in (0,1]$, we have $\alpha \leq 1 < d(4+\gamma)/2 + 1 + \gamma$, the gain of decay at each step, $-P(d,\alpha,\gamma) - \alpha$, is bounded away from 0. Therefore, after finitely many steps (with the number of steps depending only on $d$ and $\gamma$), we obtain \eqref{e:decay} for some $K_0$.
\end{proof}

The next result shows that the generating decay in Theorem \ref{t:decay-generation} cannot be improved to polynomial decay with power greater than $d+2$, or to exponential decay. Note that since $\bar b_i = -\partial_j \bar a_{ij}$, for smooth solutions \eqref{e:divergence} may be written equivalently in non-divergence form as
\begin{equation}\label{e:nondivergence}
\partial_t f + v\cdot \nabla_x f= \bar a(t,x,v)D_v^2 f + \bar c(t,x,v)f.
\end{equation}
\begin{theorem}\label{t:polynomial}
Let $\gamma \in [-2,0]$ and $p > d+2$. Assume $f$ solves \eqref{e:divergence} in $[0,T_0]\times \R^{2d}$ with
\begin{equation}\label{e:initial-lower}
f_{in}(x,v) \geq c_0 (1+|v|)^{-p}
\end{equation}
for $v, x\in\R^d$, for some $c_0>0$. Then there exist $c_1>0$ and $\beta>0$ such that
\begin{equation}\label{e:lower-bounds}
f(t,x,v) \geq c_1 e^{-\beta t} (1+|v| ) ^{-p}
\end{equation}
for all $|v|\geq 1, x\in\R^d$, and $t\in [0,T_0]$.
\end{theorem}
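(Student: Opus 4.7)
The approach is a comparison/maximum-principle argument using an explicit subsolution barrier. Working with the non-divergence form \eqref{e:nondivergence} (which is valid since $\bar{b}_i = -\partial_j\bar{a}_{ij}$), take
\[
\phi(t,v) := c_1 e^{-\beta t}\langle v\rangle^{-p}, \qquad \langle v\rangle := (1+|v|^2)^{1/2},
\]
with $c_1>0$ and $\beta>0$ to be chosen. The plan is to show that $\phi$ is a subsolution of \eqref{e:nondivergence} satisfying $\phi(0,\cdot)\leq f_{in}$, and then to apply the kinetic maximum principle of Appendix~\ref{s:A} to the difference $g := f-\phi$ to deduce $f\geq\phi$.

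The heart of the argument is verifying the subsolution property, which exploits the anisotropic structure of $\bar a$. Since $\phi$ depends only on $|v|$, the transport term vanishes, and a direct computation gives
\[
\bar{a}_{ij}\partial_{ij}\phi = c_1 e^{-\beta t}\left[-p\langle v\rangle^{-p-2}\,\mathrm{tr}(\bar{a}) + p(p+2)\langle v\rangle^{-p-4}\,\bar{a}(v,v)\right].
\]
The tangential lower bound in Lemma~\ref{l:a} yields $\mathrm{tr}(\bar{a})\gtrsim (d-1)(1+|v|)^{\gamma+2}$ (since $d-1$ eigendirections are tangential to $v$), whereas the radial upper bound gives $\bar{a}(v,v)\lesssim (1+|v|)^{\gamma+2}$. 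For $|v|$ large the negative tangential contribution dominates the positive radial one, yielding $\bar{a}_{ij}\partial_{ij}\phi \lesssim -p(d-1)(1+|v|)^{\gamma-p}c_1 e^{-\beta t}$; for bounded $|v|$, crude universal estimates give $\bar{a}_{ij}\partial_{ij}\phi \geq -C\phi$. Together with the bound $|\bar c|\leq C_f$ from Lemma~\ref{l:c} (where $C_f$ depends on $\|f\|_{L^\infty}$, finite since $f$ is a bounded weak solution), the subsolution inequality $-\beta\phi\leq \bar a_{ij}\partial_{ij}\phi + \bar c\phi$ is satisfied once $\beta$ is chosen larger than a constant depending on $p$, $d$, $\gamma$, the hydrodynamic bounds, and $\|f\|_{L^\infty}$. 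The decisive point is that $\gamma<0$ makes the ratio $(1+|v|)^\gamma$ between the dominant negative term and $\phi$ bounded.

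Since $(1+|v|)/\langle v\rangle\leq\sqrt{2}$, the choice $c_1 := c_0\, 2^{-p/2}$ ensures $\phi(0,v)=c_1\langle v\rangle^{-p}\leq c_0(1+|v|)^{-p}\leq f_{in}(x,v)$. The difference $g := f - \phi$ is then a weak supersolution of the Landau operator with $g(0,\cdot)\geq 0$, and the maximum principle of Appendix~\ref{s:A} yields $g\geq 0$ on $[0,T_0]\times\R^{2d}$. Using $\langle v\rangle\leq 1+|v|$, which gives $\langle v\rangle^{-p}\geq (1+|v|)^{-p}$, converts back to the required form \eqref{e:lower-bounds}.

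The main obstacle is applying the maximum principle on the unbounded phase space $\R^{2d}$, where neither $f$ nor the coefficients exhibit strong decay; the Appendix~\ref{s:A} version must be robust to the anisotropic scaling of $\bar a$ and to the growth of $|\bar b|$ in $v$. The hypothesis $p>d+2$ plays no direct role in the barrier construction; rather it enters as a consistency condition, since $\int_{\R^d}|v|^2(1+|v|)^{-p}\,\mathrm{d}v$ is finite precisely when $p>d+2$. For $p\leq d+2$ the assumed lower bound on $f_{in}$ would contradict the finite-energy assumption \eqref{e:E0}, rendering the statement vacuous.
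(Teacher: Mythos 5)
Your approach -- a barrier $\phi = c_1 e^{-\beta t}\langle v\rangle^{-p}$ verified to be a subsolution of the non-divergence form \eqref{e:nondivergence} via the anisotropic bounds of Lemma~\ref{l:a}, followed by the maximum principle of Appendix~\ref{s:A} -- is essentially the paper's. (The paper uses a cutoff $\eta(|v|)$ equal to $r^{-p}$ for $r\geq 1$ in place of your smooth $\langle v\rangle^{-p}$; this is cosmetic.) Your estimate $(\bar a_{ij}\partial_{ij}\phi)/\phi \gtrsim -(1+|v|)^{\gamma} \geq -C$ is the right computation, and your observation that $p>d+2$ is a consistency condition with the energy bound matches the paper's remark.

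There is, however, a gap in the way you handle the zeroth-order term and invoke the maximum principle. You verify $-\beta\phi \leq \bar a_{ij}\partial_{ij}\phi + \bar c\phi$ and cite the bound $|\bar c|\leq C_f$ from Lemma~\ref{l:c}, making $\beta$ depend on $\|f\|_{L^\infty}$. But since $\bar c\geq 0$ and $\phi\geq 0$, the term $\bar c\phi$ only helps this inequality -- no upper bound on $\bar c$ is needed -- and your computation already establishes the \emph{stronger} inequality $-\beta\phi \leq \bar a_{ij}\partial_{ij}\phi$ with a universal $\beta$. That stronger version is what the argument actually requires. Lemma~\ref{l:maximum-unbounded}, the only maximum principle in Appendix~\ref{s:A} valid on all of $\R^{2d}$, is stated for subsolutions of $\partial_t g + v\cdot\nabla_x g \leq \bar a D_v^2 g$ with no zeroth-order term, and applying it to $\phi - f$ as you propose does not go through directly: since your $\phi$ solves an inequality with $+\bar c\phi$ on the right, the difference $\phi - f$ inherits the term $+\bar c(\phi - f)$, which has the unhelpful sign precisely where $\phi > f$. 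The paper circumvents this by establishing the subsolution inequality for $\psi$ \emph{without} $\bar c$ and then writing
\[
\partial_t(c_1\psi - f) + v\cdot\nabla_x(c_1\psi - f) \;\leq\; \bar a\, D_v^2(c_1\psi - f) - \bar c\, f \;\leq\; \bar a\, D_v^2(c_1\psi - f),
\]
using $\bar c\, f \geq 0$, so Lemma~\ref{l:maximum-unbounded} applies directly and $\beta$ remains independent of $\|f\|_{L^\infty}$. Adopting this device closes the gap.
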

\begin{proof}
Let $\eta:\R_+\to\R_+$ be a smooth, decreasing function such that $\eta(r) \equiv 2$ when $r\in[0,\frac 1 2]$ and $\eta(r) = r^{-p}$ when $r\in [1,\infty)$. Note $\eta(r) \approx (1+r)^{-p}$. Let us define $\psi(t,x,v) = e^{-\beta t}\eta(|v|)$ with $\beta$ to be chosen later. Choose an arbitrary $R_0 >1$, and recall from Lemma \ref{l:a} that $\bar a_{ij}\partial_{ij}\psi \geq - C(1+|v|)^{\gamma+2}|D^2\psi|$. (Throughout this proof, $\bar a_{ij}$ and $\bar c$ are defined in terms of $f$.) From our choice of $\eta$, it is clear that $|D^2\psi|/\psi$ is uniformly bounded from above in $\R_+\times \R^d\times\{v: |v|\leq R_0 + 1\}$, so for $\beta\geq \beta_1$ sufficiently large, we have
\[-\partial_t \psi + \bar a_{ij}\partial_{ij}\psi + \bar c \psi \geq \beta \psi - C(1+|v|)^{\gamma+2}|D^2\psi| \geq 0, \quad |v| \leq R_0+1. \]

For $|v|\geq R_0$, we estimate $\bar a_{ij}\partial_{ij}\psi$ more carefully. Since $|v|\geq 1$, we have
\begin{equation*}
\partial_{ij}\psi = \frac {\partial_{rr}\psi}{|v|^2}   v_i v_j + \frac{\partial_r\psi}{|v|} \left( \delta_{ij} - \frac{v_iv_j}{|v|^2}\right) = \left[p(p+1)|v|^{-4} v_i v_j -  p|v|^{-2} \left( \delta_{ij} - \frac{v_iv_j}{|v|^2}\right)\right] e^{-\beta t}|v|^{-p},
\end{equation*}
and Lemma \ref{l:a} implies
\begin{align*}
-\partial_t \psi + \bar a_{ij}\partial_{ij}\psi &\geq  \beta \psi + \left[p(p+1) C_1|v|^{-2+\gamma}  -  pC_2|v|^{\gamma}\right] \psi \geq \left(\beta - C |v|^{\gamma}\right) \psi.
\end{align*}
For $\beta\geq \beta_2$ sufficiently large, the right-hand side is positive for all $|v|\geq R_0$. Since $\bar c(t,x,v)\geq 0$, this implies $\psi(t,x,v) = e^{-\beta t}\eta(|v|)$ with $\beta = \max(\beta_1,\beta_2)$ is a subsolution of $(-\partial_t + \bar a_{ij}\partial_{ij} + \bar c)g = 0$ in the entire domain $\R_+\times \R^{2d}$. By \eqref{e:initial-lower}, there is some $c_1\geq c_0$ so that $f_{in}(x,v) \geq c_1\psi(0,x,v)$ in $\R^{2d}$. Now we can apply the maximum principle (see Appendix \ref{s:A}) to $c_1\psi - f$ to conclude \eqref{e:lower-bounds}.
\end{proof}
\begin{remark}
The bound on the energy $\int_{\R^d} |v|^2 f(t,x,v)\dd v \leq E_0 < \infty$ implies that $f_{in}(x,v)$ cannot be bounded below by  $c_0|v|^{-p}$ with $p\leq d+2$ as $|v|\to\infty$.
\end{remark}

\begin{remark}
In particular, Theorem \textup{\ref{t:polynomial}} tells us that there is no generation of moments when $\gamma \in [-2,0]$.
\end{remark}

\section{Gaussian bounds}\label{s:gaussian}

We show the propagation of Gaussian upper bounds. The first lemma says that a sufficiently slowly decaying Gaussian is a supersolution of the linear Landau equation for large velocities. As above, the coefficients $\bar a_{ij}$ and $\bar c$ in \eqref{e:super} are defined in terms of $f$.
\begin{lemma}\label{l:super}
Let $\gamma \in(-2,0]$. Let $f$ be a bounded function satisfying \eqref{e:M0}, \eqref{e:E0}, and \eqref{e:H0}. Let $\bar a$ and $\bar c$ be given by \eqref{e:a} and \eqref{e:c} respectively. If $\alpha >0$ is sufficiently small, then there exists $R_0>0$ and $C>0$, depending on $d$, $\gamma$, $M_0$, $m_0$, $E_0$, $H_0$ and $\|f\|_{L^\infty}$, such that
\[\phi(v) := e^{-\alpha |v|^2}\]
satisfies
\begin{equation}\label{e:super}
\bar a_{ij} \partial_{ij} \phi + \bar c \phi \leq -C|v|^{\gamma+2}\phi,
\end{equation}
for $|v|\geq R_0$.
\end{lemma}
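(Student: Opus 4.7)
The plan is a direct calculation: compute $\partial_{ij}\phi$ explicitly, use Lemma \ref{l:a} to bound the elliptic part $\bar a_{ij}\partial_{ij}\phi$ from above, and use Lemma \ref{l:c} (together with the $L^\infty$ bound on $f$) to show that $\bar c\,\phi$ is a lower-order correction for $|v|$ large.

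First I would compute
\[
\partial_{ij}\phi = \bigl(4\alpha^2 v_iv_j - 2\alpha \delta_{ij}\bigr)\phi,
\]
so that
\[
\bar a_{ij}\partial_{ij}\phi = \bigl(-2\alpha\,\mathrm{tr}(\bar a) + 4\alpha^2 \bar a_{ij}v_iv_j\bigr)\phi.
\]
The key structural observation comes from Lemma \ref{l:a}: the ``parallel'' direction $v/|v|$ has a \emph{small} coefficient, $\bar a_{ij}v_iv_j \leq C|v|^2(1+|v|)^\gamma \leq C(1+|v|)^{\gamma+2}$, while the $(d-1)$ directions perpendicular to $v$ each contribute at least $c(1+|v|)^{\gamma+2}$ to the trace. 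Choosing an orthonormal basis $\{v/|v|, e^{(2)}, \dots, e^{(d)}\}$ with $e^{(k)}\cdot v=0$ and summing, we get
\[
\mathrm{tr}(\bar a) \;\geq\; (d-1)\,c\,(1+|v|)^{\gamma+2}.
\]
Combining the two estimates yields
\[
\bar a_{ij}\partial_{ij}\phi \;\leq\; \bigl(-2\alpha(d-1)c + 4\alpha^2 C\bigr)(1+|v|)^{\gamma+2}\,\phi,
\]
and for $\alpha$ small enough (say $\alpha \leq (d-1)c/(4C)$) the bracket is bounded above by $-\alpha(d-1)c$, producing a negative term of order $(1+|v|)^{\gamma+2}\phi$.

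Next I would absorb $\bar c\,\phi$ into this negative term. Lemma \ref{l:c}, applied with the global $L^\infty$ bound on $f$, gives
\[
\bar c(v) \;\lesssim\; (1+|v|)^{\max\{\gamma,\,-2-2\gamma/d\}},
\]
where the implicit constant depends on $d,\gamma,M_0,E_0$ and $\|f\|_{L^\infty}$. Since $\gamma > -2$, one checks that both $\gamma$ and $-2-2\gamma/d$ are strictly less than $\gamma+2$ (indeed $-2-2\gamma/d < \gamma+2$ is equivalent to $\gamma > -4d/(d+2)$, which is implied by $\gamma>-2$ for $d\geq 2$). Therefore, by taking $R_0$ large enough depending on the universal constants and on $\|f\|_{L^\infty}$, we can ensure
\[
\bar c(v)\,\phi \;\leq\; \tfrac{1}{2}\,\alpha(d-1)c\,(1+|v|)^{\gamma+2}\phi \qquad \text{for all } |v|\geq R_0.
\]
Adding the two estimates delivers \eqref{e:super} with $C = \tfrac{1}{2}\alpha(d-1)c$ and the claimed $R_0$.

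There is no serious obstacle here; the main thing to get right is the anisotropy of $\bar a$, namely that the ``good'' ellipticity of order $(1+|v|)^{\gamma+2}$ lives in the $(d-1)$ directions perpendicular to $v$ and therefore shows up in $\mathrm{tr}(\bar a)$, while the quadratic form $\bar a_{ij}v_iv_j$ only sees the degenerate eigenvalue of order $(1+|v|)^\gamma$. This asymmetry is what makes the $-2\alpha\,\mathrm{tr}(\bar a)$ contribution dominate the $+4\alpha^2\bar a_{ij}v_iv_j$ contribution for small $\alpha$, which is the whole point of the lemma.
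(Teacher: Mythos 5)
Your proof is correct and follows essentially the same route as the paper: compute $\partial_{ij}\phi$, exploit the anisotropy of $\bar a$ from Lemma~\ref{l:a} (small eigenvalue in the $v$-direction so $4\alpha^2\bar a_{ij}v_iv_j$ is manageable, large perpendicular eigenvalues so $-2\alpha\,\mathrm{tr}(\bar a)$ gives a term of order $-(1+|v|)^{\gamma+2}$), take $\alpha$ small, and absorb $\bar c\phi$ via Lemma~\ref{l:c} since $\max\{\gamma,-2-2\gamma/d\}<\gamma+2$. The only cosmetic difference is that you group the computation as $-2\alpha\,\mathrm{tr}(\bar a)+4\alpha^2\bar a_{ij}v_iv_j$ rather than splitting $\partial_{ij}\phi$ into radial and tangential components, but this is the same estimate.
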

\begin{proof}
Since $\phi$ is radial, we have
\begin{equation*}
\partial_{ij}\phi = \frac {\partial_{rr}\phi}{|v|^2}   v_i v_j + \frac{\partial_r\phi}{|v|} \left( \delta_{ij} - \frac{v_iv_j}{|v|^2}\right) = \left[\frac {4\alpha^2|v|^2 - 2\alpha}{|v|^2}   v_i v_j -  2\alpha  \left( \delta_{ij} - \frac{v_iv_j}{|v|^2}\right)\right] e^{-\alpha |v|^2},
\end{equation*}
and the bounds \eqref{e:aij-lower} and \eqref{e:aij-upper} imply
\begin{align*}
\bar a_{ij}\partial_{ij}\phi &\leq \left[(4\alpha^2|v|^2 - 2\alpha)C_1 |v|^{\gamma}  -  2\alpha C_2 |v|^{\gamma+2}\right] e^{-\alpha |v|^2}\\
&= \left((4\alpha^2 C_1 - 2\alpha C_2) |v|^{\gamma+2} - 2\alpha C_1 |v|^{\gamma}\right) e^{-\alpha|v|^2}\\
&\leq -C |v|^{\gamma+2} \phi(v),
\end{align*}
for $|v|$ sufficiently large, provided $\alpha < C_2 / (2C_1)$. With Lemma \ref{l:c} (this is the point where $\|f\|_{L^\infty}$ plays a role), this implies
\[\bar a_{ij} \partial_{ij}\phi  + \bar c \phi \leq \left[-C|v|^{\gamma+2} + C|v|^{-2-2\gamma/d}\right]\phi(v).\]
For $-2<\gamma \leq 0$, the first term on the right-hand side will dominate for large $|v|$, since $\gamma+2 > 0 > -2-2\gamma/d$.
\end{proof}

Theorem \ref{t:decay-generation} gives us an upper bound for a solution $f$ to the Landau equation which is useful away from $t=0$. If the initial data $f(0,x,v)$ is a bounded function, we can improve our upper bound for small values of $t$ using the upper bound for $f(0,x,v)$. That is the purpose of the next lemma.
\begin{lemma}\label{l:maximum-principle}
Let $f: [0,T_0]\times \R^{2d}\to \R$ be a solution of the Landau equation \eqref{e:divergence} for some $\gamma\in (-2, 0]$, and suppose that $g: [0,T_0]\times \R^{2d}\to \R$ is bounded from above and a subsolution to the equation
\begin{equation}\label{e:subequation}
\partial_t g(t,x,v) + v\cdot \nabla_x g(t,x,v) \leq \bar a_{ij}(t,x,v) \partial_{ij} g(t,x,v) + \bar c(t,x,v) g(t,x,v),
\end{equation}
where $\bar a_{ij}$ and $\bar c$ are defined in terms of $f$ as in \eqref{e:a} and \eqref{e:c}. Let $\kappa(t)$ be defined by
\begin{equation}\label{e:kexponent}
\kappa(t) = \left\{\begin{array}{cl} \frac{\beta}{1+\gamma/2} t^{1+\gamma/2}, & 0\leq t \leq 1 \\ \frac{\beta}{1+\gamma/2} + \beta(t-1), & t\geq 1 \end{array} \right. ,
\end{equation}
where $\beta>0$ depends only on $d$, $\gamma$, $m_0$, $M_0$, $E_0$, and $H_0$.
Then
\begin{equation}\label{e:max}
\sup\limits_{[0,T_0]\times \R^d} e^{-\kappa(t)}g_+(t,x,v) = \sup\limits_{\R^{2d}} g_+(0,x,v). \end{equation}
\end{lemma}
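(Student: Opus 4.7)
The plan is to treat $e^{\kappa(t)}$ as an integrating factor chosen so that it absorbs the entire zeroth-order coefficient $\bar c$, then reduce to a standard kinetic Fokker-Planck maximum principle in the form stated in Appendix \ref{s:A}.

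The first step is to pin down the precise size of $\bar c$ as a function of $t$. By Theorem \ref{t:decay-generation} applied to $f$, we have the pointwise bound $f(t,x,v)\leq K_0(1+t^{-d/2})(1+|v|)^{-1}$, which in particular yields $\|f(t,x,\cdot)\|_{L^\infty(\R^d)} \lesssim 1 + t^{-d/2}$ uniformly in $x$. Feeding this into Lemma \ref{l:c}, and treating separately the two regimes $\gamma\in[-2d/(d+2),0]$ and $\gamma\in(-2,-2d/(d+2))$, one computes $(1+\|f\|_{L^\infty})^{-\gamma/d}\lesssim 1 + t^{\gamma/2}$ as $t\to 0^+$, so that $\bar c(t,x,v) \leq C_0(1 + t^{\gamma/2})$ for a universal $C_0$. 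Comparing with the definition of $\kappa$ in \eqref{e:kexponent}, which gives $\kappa'(t)=\beta t^{\gamma/2}$ for $t\leq 1$ and $\kappa'(t)=\beta$ for $t>1$, one sees that choosing the universal constant $\beta$ large enough forces $\kappa'(t)\geq \bar c(t,x,v)$ pointwise on $(0,T_0]\times\R^{2d}$. This is the step on which the precise exponent in \eqref{e:kexponent} depends, and it is really the whole point of the piecewise definition.

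With the integrating factor in hand, set $\tilde g(t,x,v) := e^{-\kappa(t)}g(t,x,v)$. A direct computation using \eqref{e:subequation} gives
\[
\partial_t \tilde g + v\cdot \nabla_x \tilde g - \bar a_{ij}\partial_{ij}\tilde g - \bigl(\bar c(t,x,v) - \kappa'(t)\bigr)\tilde g \leq 0,
\]
so $\tilde g$ is a subsolution of a kinetic Fokker-Planck equation whose zeroth-order coefficient $\bar c - \kappa'$ is nonpositive everywhere. Moreover, since $\kappa\geq 0$ and $g$ is bounded from above by hypothesis, $\tilde g$ is bounded from above as well, and at $t=0$ we have $\tilde g(0,x,v)=g(0,x,v)$, so $\sup_{\R^{2d}}\tilde g_+(0,\cdot,\cdot) = \sup_{\R^{2d}} g_+(0,\cdot,\cdot)$.

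The final step is to apply the maximum principle from Appendix \ref{s:A} to $\tilde g$: a bounded-above subsolution of a kinetic Fokker-Planck equation with measurable coefficients and nonpositive zeroth-order term attains its positive supremum only at the initial time. This yields $\sup_{[0,T_0]\times\R^{2d}}\tilde g_+ = \sup_{\R^{2d}} g_+(0,\cdot,\cdot)$, which is \eqref{e:max}. The main obstacle is the first step, converting the $t^{-d/2}$ blowup of $\|f\|_{L^\infty}$ into the correct $t^{\gamma/2}$ bound on $\bar c$ with a \emph{universal} constant; once this is in hand the maximum principle reduction is straightforward and all technical issues about non-compactness of the domain are offloaded to the appendix.
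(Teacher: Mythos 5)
Your proposal is correct and follows essentially the same route as the paper's proof: use Theorem \ref{t:decay-generation} to bound $\|f(t,x,\cdot)\|_{L^\infty}$, feed this into Lemma \ref{l:c} to get $\bar c \lesssim 1+t^{\gamma/2}$, choose $\beta$ so that $\kappa'\geq \bar c$, and pass to $\tilde g = e^{-\kappa}g$ before invoking the whole-space maximum principle of Appendix \ref{s:A}. One small caveat: Lemma \ref{l:maximum-unbounded} is stated for subsolutions \emph{without} a zeroth-order term, so the final step should explicitly drop the nonpositive term $(\bar c - \kappa')\tilde g$ (as in the paper) rather than appeal to a maximum principle ``with nonpositive zeroth-order term,'' which the appendix does not literally provide; this is a presentational detail, not a gap.
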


\begin{proof}

By Theorem \ref{t:decay-generation}, we have that $f(t,x,v)\leq K_0t^{-d/2}$ for $0<t<1$.  Hence by Lemma \ref{l:c}, we have that $\bar c(t,x,v) \lesssim t^{\gamma/2}$. Since $\gamma > -2$, for some universal $\beta>0$, $\kappa(t)$ satisfies $\bar c(t,x,v)\leq \kappa'(t)$ for all $t>0$.  Thus $\tilde{g}(t,x,v) = e^{-\kappa(t)}g(t,x,v)$ satisfies
\begin{align*}
\partial_t \tilde{g}(t,x,v) + v\cdot \nabla_x\tilde g(t,x,v) &\leq \bar a_{ij}(t,x,v)\partial_{ij}\tilde{g}(t,x,v) + (\bar c(t,x,v)-\kappa'(t))\tilde{g}(t,x,v)\\
 &\leq \bar a_{ij}(t,x,v)\partial_{ij}\tilde{g}(t,x,v).
\end{align*}
We apply Lemma \ref{l:maximum-unbounded} from the Appendix to $\tilde g(t,x,v) - \displaystyle\sup_{\R^{2d}} g(0,x,v)$ to conclude \eqref{e:max}.
\end{proof}

\begin{proof}[Proof of Theorem \ref{t:propagation-gaussian}]


Applying Lemma \ref{l:maximum-principle} with $g=f$ and $t \in [0,1]$ and Theorem \ref{t:decay-generation} for $t>1$, we have that there is some constant $C_2$ (depending on $C_0$, $d$, $\gamma$, $M_0$, $m_0$, $E_0$, and $H_0$) so that $f(t,x,v) \leq C_2$ for all $t \geq 0$, $x \in \R^d$ and $v \in \R^d$.

Let $\phi(v) := e^{-\alpha|v|^2}$. From Lemma \ref{l:super}, we have that there is a $C$, depending on $C_2$, $d$, $\gamma$, $M_0$, $m_0$, $E_0$, and $H_0$, such that
\[\sup\limits_{(0,T_0]\times \R^d \times \R^d} \bar a_{ij}\partial_{ij} \phi + \bar c \phi \leq C \phi.\]
Thus $C_0 e^{Ct} \phi(v)$ is a supersolution of the equation and $f(t,x,v) \leq C_0 e^{Ct} \phi(v)$ for all $t >0$, $x \in \R^d$ and $v \in \R^d$.

This upper bound is good for small values of $t$. We see that there is some time $t_0 > 0$ so that $C_0 e^{Ct_0} \phi(v) > C_2$ for $|v| < R_0$. Here $C_2$ is the upper bound for $f$ mentioned above and $R_0$ is the radius from Lemma \ref{l:super}. Thus, the function
\[ g(t,x,v) :=\left[ f(t_0+t,x,v) - C_0 e^{Ct_0} \phi(v) \right]_+\]
is a supersolution of
\[ g_t + v \cdot \nabla_x g \leq \bar a_{ij} \partial_{ij} g + \bar c g. \]
Applying the maximum principle (Lemma \ref{l:maximum-unbounded}), we have that $g \leq 0$ for all $t>0$, so $f(t,x,v) \leq C_0 e^{t_0 C} \phi(v)$ for all $t>t_0$, and we conclude the proof.
\end{proof}

By combining Theorem \ref{t:propagation-gaussian} with the local H\"older estimates proved in \cite{golse2016} or \cite{wang2011ultraparabolic}, we derive a global H\"older estimate for solutions of \eqref{e:divergence} under the assumption that $f_{in}(x,v) \leq C_0 e^{-\alpha |v|^2}$. The following local estimate is essentially the same as Theorem 2 of \cite{golse2016}:
\begin{theorem}\label{t:local-holder}
Let $f$ be a weak solution of
\begin{equation}\label{e:FP2}
\partial_t f + v\cdot \nabla_x f = \nabla_v \cdot(A\nabla_v f) + B \cdot\nabla_v f + s
\end{equation}
in $Q_1$, with $\lambda I \leq A \leq \Lambda I$, $|B|\leq \Lambda$, and $s\in L^\infty(Q_1)$. Then $f$ is H\"older continuous with respect to $(t,x,v)$ in $Q_{1/2}$, and
\[\frac{|f(z_1) - f(z_1)|}{|t_1-t_2|^{\beta/2}+|x_1-x_2|^{\beta/3}+|v_1+v_2|^\beta} \leq C(\|f\|_{L^2(Q_1)} + \|s\|_{L^\infty(Q_1)}),\]
for all $z_1,z_2\in Q_{1/2}$, where $\beta$ and $C$ depend on $d$, $\lambda$, and $\Lambda$.
\end{theorem}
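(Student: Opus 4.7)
The plan is to reduce the statement to Theorem 2 of \cite{golse2016}, which gives essentially the same Hölder estimate for weak solutions of kinetic Fokker-Planck equations with bounded, uniformly elliptic diffusion coefficients. Two modifications are required: incorporating the bounded first-order drift $B\cdot\nabla_v f$ into their argument, and upgrading the right-hand side from an $L^\infty$ norm to an $L^2$ norm of $f$.

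First, I would verify that the De Giorgi iteration of \cite{golse2016} extends to equations with a bounded drift. Their argument rests on energy inequalities obtained by testing \eqref{e:FP2} against truncations such as $(f-k)_+ \chi^2$, where $\chi$ is a suitable cutoff. The coercive contribution from $\nabla_v\cdot(A\nabla_v f)$ is $\int A\nabla_v(f-k)_+\cdot \nabla_v(f-k)_+\,\chi^2$. The additional term produced by $B\cdot\nabla_v f$ has the form $\int B\cdot \nabla_v(f-k)_+\,(f-k)_+ \chi^2$, and since $|B|\leq \Lambda$, it is absorbed into the coercive term via Cauchy-Schwarz and Young's inequality, at the cost of a constant depending only on $\lambda$ and $\Lambda$. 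The subsequent isoperimetric step and level-set iteration are unaffected. Thus the conclusion of Theorem 2 of \cite{golse2016} extends to \eqref{e:FP2}, yielding a Hölder estimate on $Q_{1/2}$ of the stated form with $\|f\|_{L^\infty(Q_{3/4})} + \|s\|_{L^\infty(Q_1)}$ on the right-hand side.

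Second, I would upgrade to the $L^2$ norm using the local $L^2 \to L^\infty$ bound also established in \cite{golse2016} (the first ingredient invoked in the proof of Proposition \ref{p:harnack}). By a covering of $Q_{3/4}$ by cubes of the form $Q_{1/4}(z_0)$ with $z_0 \in Q_{3/4}$ and applying that bound on each, we obtain
\[
\|f\|_{L^\infty(Q_{3/4})} \lesssim \|f\|_{L^2(Q_1)} + \|s\|_{L^\infty(Q_1)}.
\]
Combining this with the Hölder estimate on $Q_{1/2}$ yields the claim, with the constant and exponent $\beta$ depending only on $d$, $\lambda$, and $\Lambda$.

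The main obstacle is the first step: carefully verifying that the argument in \cite{golse2016} accommodates a bounded first-order term without altering its structural features beyond the ellipticity constants. This is routine but requires going through the energy estimate and iteration in their paper. Everything else is a direct application followed by a standard covering and rescaling argument.
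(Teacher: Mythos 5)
The paper gives no proof of Theorem~\ref{t:local-holder}: it simply remarks that the statement ``is essentially the same as Theorem~2 of \cite{golse2016}'' and moves on. Your proposal fills in a justification for that citation, and what you write is sound, but you are likely being more cautious than necessary. The reference \cite{golse2016} already treats kinetic Fokker-Planck equations with a bounded drift $B\cdot\nabla_v f$ and a bounded source $s$ --- this is visible in this very paper, where Proposition~\ref{p:harnack} invokes \cite{golse2016} for equation~\eqref{e:FP}, which contains both $B\cdot\nabla_v g$ and $s$. So the ``main obstacle'' you identify (re-running the De Giorgi energy estimate to absorb the drift term) has already been carried out there, and your reduction is really just a pointer to their Theorem~2 rather than an extension of it. The second step of your argument, upgrading $\|f\|_{L^\infty}$ to $\|f\|_{L^2(Q_1)}$ on the right-hand side by combining the local $L^2\to L^\infty$ bound with a covering of an intermediate cylinder, is correct and standard; it is the same interpolation-by-shrinking-cylinders idea used in the paper's proof of Proposition~\ref{p:harnack}, and again it may already be built into the statement of \cite{golse2016} in the form quoted. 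In short: your argument is correct and would serve as a proof if one had only a weaker form of the cited result, but the paper's intent is that the theorem is a direct consequence of \cite{golse2016} with no additional work.
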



To state our theorem as a global H\"older estimate, we will need an appropriate notion of distance in $\R \times \R^d \times \R^d$ which is invariant by Galilean transformations. A natural choice is the following
\[ d_P(z_1,z_2) := \min \{ r : \exists z \in \R \times \R^d \times \R^d : z_1 \in Q_r(z) \text{ and } z_2 \in Q_r(z) \}.\]
We can easily estimate the value of $d_P(z_1,z_2)$ by the simpler formula
\[ d_P(z_1,z_2) \approx |t_1-t_2|^{1/2} + |x_1 - x_2 - (t_1-t_2)(v_1+v_2)/2|^{1/3} + |v_1-v_2|.\]

It turns out that we need to deform this distance using the transformation $\mathcal T_z$ described in Lemma \ref{l:T}. We define
\[ d_L(z_1,z_2) := \min \{ |v|^{1+\gamma/2} r: z \in \R \times \R^d \times \R^d : \mathcal T_z^{-1} z_1 \in Q_{r} \text{ and } \mathcal T_z^{-1} z_2 \in Q_{r} \}.\]
(Here, we make the convention that $\mathcal T_{z} = \mathcal S_z$ when $|v|< 2$.) An explicit expression for $d_L(z_1,z_2)$ is messy. It involves the affine transformation $\mathcal T$ which is anisotropic and affects both the $x$ and $v$ variables. In the case that we compare two points with identical values of $t$ and $x$, it is straightforward to check that when $ d_L((t,x,v_1),(t,x,v_2))<1$, then $d_L$ is equivalent to the metric introduced by Gressman-Strain \cite{gressman2011boltzmann} in their study of the Boltzmann equation.



\begin{theorem}\label{t:holder}
Under the assumptions of Theorem \textup{\ref{t:propagation-gaussian}}, there exist $C>0$ and $\beta \in (0,1)$ depending on $C_0$, $\alpha$, $d$, $\gamma$, $m_0$, $M_0$, $E_0$, and $H_0$, such that for any $z_1,z_2\in [0,T_0]\times \R^{2d}$, one has
\begin{align*}
|f(z_1) - f(z_2)| &\leq C\left(e^{-\alpha|v_1|^2} + e^{-\alpha|v_2|^2}\right)\min\left\{ 1, \left(1+ t_1^{-\beta/2}+t_2^{-\beta/2}\right)
d_L(z_1,z_2)^\beta\right\}.
\end{align*}
\end{theorem}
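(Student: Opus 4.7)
The strategy is to combine the interior Hölder estimate of Theorem \ref{t:local-holder} with the isotropizing change of variables $\mathcal{T}_{z_0}$ from Lemma \ref{l:T}, using the Gaussian bound from Theorem \ref{t:propagation-gaussian} so that the constants inherit Gaussian decay in $v$.

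First, the bound $f \leq C_1 e^{-\alpha|v|^2}$ from Theorem \ref{t:propagation-gaussian} immediately gives the trivial estimate $|f(z_1)-f(z_2)| \leq 2C_1(e^{-\alpha|v_1|^2}+e^{-\alpha|v_2|^2})$, which accounts for the factor $\min\{1,\cdot\}$ in the statement. It remains to prove the estimate when $(1+t_1^{-\beta/2}+t_2^{-\beta/2})d_L(z_1,z_2)^\beta$ is small.

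In that regime, invoking the definition of $d_L$ I would choose a base point $z_0=(t_0,x_0,v_0)$ and a radius $r>0$ such that $z_1,z_2 \in \mathcal{T}_{z_0}(Q_r)$, with $r$ chosen so that $\max\{|v_0|^{1+\gamma/2},1\}\, r \approx d_L(z_1,z_2)$ and with $v_0$ chosen so that $|v_0| \approx |v_1| \approx |v_2|$ and $t_0\approx t_1\approx t_2$. Letting $f_T(z):=f(\mathcal{T}_{z_0}(z))$, Lemma \ref{l:T}(b) tells us that $f_T$ satisfies a kinetic Fokker-Planck equation that is uniformly elliptic in $Q_R$ with $R:=\min\{\sqrt{t_0}/2,\,c_1|v_0|^{-1-\gamma/2}\}$ (using $\mathcal{S}_{z_0}$ and $R=\sqrt{t_0}/2$ in the case $|v_0|<2$). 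Moreover, Theorem \ref{t:propagation-gaussian} provides $\|f\|_{L^\infty(B_\rho(v_0))} \lesssim e^{-\alpha'|v_0|^2}$ for any $\alpha'<\alpha$, so the drift $B$ and zeroth-order coefficient $C$ in the transformed equation satisfy $|B|\leq \Lambda/R$ and $|C|\lesssim e^{-\alpha'|v_0|^2}$ throughout $Q_R$.

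Rescaling to unit scale via $g(t,x,v):=f_T(R^2 t, R^3 x, Rv)$ produces a uniformly elliptic kinetic Fokker-Planck equation on $Q_1$ with bounded coefficients and source $R^2 C g$. Since $\mathcal{T}_{z_0}(Q_R)$ lies where $|v|\approx|v_0|$, both $\|g\|_{L^2(Q_1)}$ and $\|R^2 Cg\|_{L^\infty(Q_1)}$ are bounded by $Ce^{-\alpha''|v_0|^2}$ for some $\alpha''<\alpha$. Theorem \ref{t:local-holder} applied to $g$ then yields
\[|g(w_1)-g(w_2)| \leq C e^{-\alpha''|v_0|^2} d_P(w_1,w_2)^\beta, \quad w_1,w_2\in Q_{1/2}.\]
Unwinding the rescaling and the change of variables, and using $e^{-\alpha''|v_0|^2} \lesssim e^{-\alpha''|v_1|^2}+e^{-\alpha''|v_2|^2}$, one obtains
\[|f(z_1)-f(z_2)| \leq C\bigl(e^{-\alpha''|v_1|^2}+e^{-\alpha''|v_2|^2}\bigr) \bigl(R\max\{|v_0|^{1+\gamma/2},1\}\bigr)^{-\beta} d_L(z_1,z_2)^\beta.\]
The prefactor $(R\max\{|v_0|^{1+\gamma/2},1\})^{-\beta}$ is a universal constant when $R=c_1|v_0|^{-1-\gamma/2}$ and is $\lesssim t_0^{-\beta/2}$ when $R=\sqrt{t_0}/2$, so it is controlled by $1+t_0^{-\beta/2}\lesssim 1+t_1^{-\beta/2}+t_2^{-\beta/2}$, completing the estimate.

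The main obstacle is the scaling bookkeeping in the last step: verifying that $d_P$ in the rescaled transformed coordinates corresponds to $d_L$ in the original coordinates up to the factor $R\max\{|v_0|^{1+\gamma/2},1\}$, and that the minimizer defining $d_L$ can be chosen at (or within a universal constant of) a $z_0$ with $|v_0|\approx |v_1|\approx |v_2|$. These comparisons must be made directly from the definition of $d_L$ and the explicit form of $\mathcal{T}_{z_0}$, with the mild loss $\alpha\to \alpha''$ absorbed into the Gaussian factor.
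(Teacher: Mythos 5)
Your proposal follows essentially the same route as the paper: center a cylinder at the minimizer in $d_L$, apply $\mathcal T_{z_0}$ and rescale by $R = \min\{\sqrt{t_0},\,c_1|v_0|^{-1-\gamma/2}\}$ to land in the setting of Theorem \ref{t:local-holder}, and obtain the Gaussian prefactor from Theorem \ref{t:propagation-gaussian}, with the trivial estimate handling the regime $d_L(z_1,z_2)\gtrsim R$. One minor imprecision: by Lemma \ref{l:c} the zeroth-order coefficient $C$ is only $\lesssim |v_0|^\gamma$ when $\|f\|_{L^\infty}\lesssim 1$, not Gaussian in $v_0$; the Gaussian decay of the source $s = Cf_T$ comes from the factor $f_T$ itself, which is in fact how you estimate $\|R^2 C g\|_{L^\infty(Q_1)}$ in the following step, so the conclusion stands.
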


\begin{proof}
If $|v_1|\leq 2$ or $|v_2| \leq 2$, the result follows by applying Theorem \ref{t:local-holder} directly to $f$, noting that $1\lesssim e^{-\alpha|v_1|^2}+e^{-\alpha|v_2|^2}$. So, we can assume that $|v_1| > 2$ and $|v_2|>2$.

Let $\bar z = (\bar t, \bar x, \bar v)$ be the point achieving the minimum in the definition of $d_L(z_1,z_2)$. Thus $\tilde z_1 := \mathcal T_{\bar z}^{-1} z_1 \in Q_\delta$ and $\tilde z_1 := \mathcal T_{\bar z}^{-1} z_1 \in Q_\delta$, where $\delta = |\bar v|^{-1-\gamma/2}d_L(z_1,z_2)$.

Let $r := \min\left(t_1^{1/2}, t_2^{1/2},(1+|\bar v|)^{-1-\gamma/2}\right)$. If $\delta \geq r/2$, then we simply estimate $|f(z_1) - f(z_2)| \leq C_1(e^{-\alpha|v_1|^2}+e^{-\alpha|v_2|^2})$ from Theorem \ref{t:propagation-gaussian}. We need to concentrate on the case $\delta < r/2$.

Let us consider the function $f_T$ as in Lemma \ref{l:T}, with base point $\bar z$. By our choice of $r$, $f_T$ satifies an equation of the form \eqref{e:FP2} in $Q_r$, and since Theorem \ref{t:propagation-gaussian} gives us a bound on $\|f_T\|_{L^\infty}$, we have that $A$ is uniformly elliptic (with constants independent of $\bar z$), $|B|\lesssim |\bar v|^{1+\gamma/2}$, and $|s|=|C(z)f_T| \lesssim |f_T|$. Defining $\tilde f_T(t,x,v) := f_T(r^2t,r^3x,rv)$, we see that $\tilde f_T$ satisfies another equation of the form \eqref{e:FP2} with the new $|B|$ bounded independently of $|\bar v|$. Moreover, the points $(r^{-2} \tilde t_1, r^{-3} \tilde x_1, r^{-1} \tilde v_1)$ and $(r^{-2} \tilde t_2, r^{-3} \tilde x_2, r^{-1} \tilde v_2)$ belong to $Q_{r^{-1} \delta} \subset Q_{1/2}$. Therefore, we can apply Theorem \ref{t:local-holder} to $\tilde f_T$ in $Q_1$ to obtain
\begin{align*}
\frac{|f(z_1) - f(z_2)|}{r^{-\beta} d_L(z_1,z_2)^\beta}|\bar v|^{\beta(1+\gamma/2)} &= \frac{|f_T(\tilde z_1) - f_T(\tilde z_2)|}{r^{-\beta} \delta^\beta} \\
&= \frac{|\tilde f_T(r^{-2} \tilde t_1, r^{-3} \tilde x_1, r^{-1} \tilde v_1) - \tilde f_T(r^{-2} \tilde t_2, r^{-3} \tilde x_2, r^{-1} \tilde v_2)|}{r^{-\beta} \delta^\beta}, \\
&\lesssim \|\tilde f_T\|_{L^1(Q_1)} + \|\tilde f_T\|_{L^\infty(Q_1)} \lesssim  \sup_{v\in Q_{r/2}}e^{-\alpha|\bar v+Tv|^2}  \lesssim e^{-\alpha|\bar v|^2}.
\end{align*}
We have used Theorem \ref{t:propagation-gaussian} to estimate the $L^\infty$ norm of $\tilde f_T$ in $Q_1$. Rewriting this estimate, we obtain
\begin{align*}
 |f(z_1) - f(z_2)| &\lesssim r^{-\beta} |\bar v|^{-\beta(1+\gamma/2)} d_L(z_1,z_2)^\beta e^{-\alpha|\bar v|^2}\\
  &\lesssim \left(1+t_1^{-\beta/2}+t_2^{-\beta/2}\right) d_L(z_1,z_2)^\beta \left(e^{-\alpha|\bar v_1|^2} + e^{-\alpha|\bar v_2|^2}\right) .
\end{align*}
\end{proof}

\appendix

\section{Maximum principle for weak solutions to kinetic Fokker-Planck equations}\label{s:A}

In this appendix, we give a proof of the maximum principle in a form that is convenient for our purposes.

The following proposition is perhaps a classical result. We prove it here, since we could not find any easy reference and also for completeness. The result is for equations on a bounded domain with general coefficients (not necessarily defined by integrals as above).

\begin{proposition}\label{p:maximum-bounded}
Let $Q = [0,T_0]\times \Omega$, where $\Omega \subset \R^{2d}$ is a bounded domain, and assume that $g$ is a subsolution of the equation
\begin{equation}\label{e:weak-inequality}
\partial_t g + v\cdot \nabla_x g \leq \nabla_v\cdot [a(t,x,v) \nabla_v g] + b(t,x,v)\cdot \nabla_v g + c(t,x,v) g,
\end{equation}
in the weak sense in $Q$, where $a$ is uniformly elliptic in $Q$ with constants $\lambda$ and $\Lambda$, and $b$ and $c$ are uniformly bounded in $Q$.

If $g\leq 0$ on the parabolic boundary of $Q$, then $g\leq 0$ in $Q$.
\end{proposition}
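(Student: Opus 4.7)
The plan is a standard energy/Gr\"onwall argument, using $g_+$ (or rather a smooth approximation of it) as a test function in the weak formulation. The essential point is that $g_+$ vanishes on the parabolic boundary, so after integration by parts none of the boundary contributions will have the wrong sign.

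First I would reduce to the case $c\equiv 0$ by the substitution $\tilde g(t,x,v) := e^{-\Lambda_c t} g(t,x,v)$, where $\Lambda_c := \|c\|_{L^\infty(Q)}$. Then $\tilde g$ is a weak subsolution of the same type of equation with the zeroth-order coefficient replaced by $c - \Lambda_c \leq 0$, and $\tilde g \leq 0$ on the parabolic boundary iff $g\leq 0$. This way the $cg$ term can only help us.

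Next I would test the equation against $\varphi = g_+ \mathbf{1}_{[0,\tau]}$ for $\tau \in (0,T_0]$. Because a weak subsolution need not be regular enough to justify this directly, I would approximate $(\cdot)_+$ by $G_\delta(s) := \int_0^s \chi_\delta(\sigma)\dd \sigma$, where $\chi_\delta$ is a smooth approximation of $\mathbf{1}_{(0,\infty)}$, and handle the time derivative via Steklov averages; passing $\delta\to 0$ at the end is routine. The four terms behave as follows. The time derivative contributes $\tfrac12\int_\Omega g_+(\tau)^2 - \tfrac12\int_\Omega g_+(0)^2$, and the second piece vanishes by the initial condition. The transport term $v\cdot\nabla_x g \cdot g_+ = v\cdot \nabla_x(g_+^2/2)$ integrates by parts in $x$ to a boundary term $\tfrac12\int_{\partial\Omega}(v\cdot n_x)g_+^2\dd S$; on the inflow portion $\{v\cdot n_x<0\}$ we have $g_+\equiv 0$ by the boundary hypothesis, and on the outflow portion the term is nonnegative, so this piece drops with a favorable sign. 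The diffusive term integrates by parts in $v$ (with $g_+ = 0$ on $\partial\Omega$ in the $v$-variable, again by the parabolic-boundary hypothesis) to yield $\int a\nabla_v g\cdot \nabla_v g_+ = \int a\nabla_v g_+\cdot \nabla_v g_+ \geq \lambda \int |\nabla_v g_+|^2$. The drift term is estimated by Cauchy--Schwarz and absorbed:
\[
\left|\int b\cdot \nabla_v g_+ \, g_+\right| \leq \frac{\lambda}{2}\int |\nabla_v g_+|^2 + \frac{\|b\|_{L^\infty}^2}{2\lambda}\int g_+^2.
\]
Combining everything yields, with $\phi(\tau) := \int_\Omega g_+(\tau,\cdot)^2$,
\[
\phi(\tau) \leq C\int_0^\tau \phi(s)\dd s, \qquad C = C(\lambda,\|b\|_{L^\infty}).
\]
Gr\"onwall's inequality then gives $\phi \equiv 0$, hence $g_+ \equiv 0$ in $Q$.

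The main technical obstacle is the rigorous justification of using $g_+$ as a test function and of the trace conditions on $\partial\Omega$: a weak subsolution is typically only in an anisotropic Sobolev space of type $L^2_t H^1_v$ with $(\partial_t + v\cdot\nabla_x)g$ in a negative-order space, and the notion of trace on the full kinetic boundary is delicate. I would address this via the Steklov-averaging/mollification scheme above, and by interpreting ``$g\leq 0$ on the parabolic boundary'' in the trace sense of the appropriate function space (which is standard for kinetic Fokker--Planck problems). Apart from that, every step is a direct computation.
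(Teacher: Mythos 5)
Your proposal follows essentially the same energy/Gr\"onwall argument as the paper: test the weak inequality against $g_+$, use uniform ellipticity to control the diffusion term, absorb the drift by Young's inequality, and close with Gr\"onwall to conclude $g_+\equiv 0$. The small cosmetic difference is that you first remove $c$ via the factor $e^{-\Lambda_c t}$, whereas the paper keeps $c$ and simply absorbs $\|c\|_{L^\infty}\int g_+^2$ into the Gr\"onwall constant; these are interchangeable. Where your version genuinely adds value is in making explicit two points the paper elides: (i) the transport term $v\cdot\nabla_x(g_+^2/2)$ produces a boundary integral $\tfrac12\int_{\partial\Omega}(v\cdot n_x)\,g_+^2$ after integrating by parts in $x$, which vanishes on the inflow set (since $g_+=0$ there by the parabolic-boundary hypothesis) and is nonnegative on the outflow set, so it drops with a favorable sign -- the paper silently folds $\partial_t+v\cdot\nabla_x$ into ``$\tfrac{d}{dt}$'' without comment; and (ii) the justification of $g_+$ as an admissible test function via smoothing of $(\cdot)_+$ and Steklov averages. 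Both points are correct completions of steps the paper leaves implicit, and neither changes the substance of the argument. (Incidentally, your Young's-inequality constant $\|b\|_{L^\infty}^2/(2\lambda)$ is the correct one; the paper's displayed coefficient $\|b\|_{L^\infty}/(4\lambda)$ is a typo missing the square.)
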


\begin{proof}
Choosing the test function $\phi = g_+$, the weak formulation of \eqref{e:weak-inequality} gives
\begin{align*}
\int_Q g_+ \left(\partial_t g +  v\cdot \nabla_x g\right) \dd x \dd v \dd t &\leq \int_Q \left(-a \nabla_v g \nabla_v g_+ - g_+b \cdot \nabla g + c g_+^2\right)\dd x \dd v \dd t,
\end{align*}
or
\begin{align*}
\int_Q\frac 1 2\frac d {dt} (g_+)^2 \dd x \dd v \dd t &\leq \int_Q \left(-\lambda |\nabla_v g_+|^2 - b g \nabla g_+ + c g_+^2\right)\dd x \dd v \dd t\\
&\leq \left(\frac{\|b\|_{L^\infty}}{4\lambda} + \|c\|_{L^\infty}\right) \int_Q g_+^2\dd x \dd v \dd t,
\end{align*}
by Young's inequality. We apply Gronwall's Lemma to $\int_{\Omega} (g_+)^2\dd x \dd v$ on $[0, T_0]$ to conclude $g_+ \equiv 0$ in $Q$.
\end{proof}

Next, we derive a maximum principle on the whole space for subsolutions of a Landau-type equation without a zeroth-order term:
\begin{lemma}\label{l:maximum-unbounded}
Let $g$ be a bounded function on $[0,T_0]\times \R^{2d}$ that satisfies
\begin{equation}\label{e:weak-subsolution-2}
\partial_t g + v\cdot \nabla_x g \leq \bar a(t,x,v) D^2_v g,
\end{equation}
in the weak sense. Here, $\bar a(t,x,v)$ is defined as in \eqref{e:a} in terms of a function $f$ satisfying \eqref{e:M0}, \eqref{e:E0}, and \eqref{e:H0}. If $g(0,x,v) \leq 0$ in $\R^{2d}$, then $g(t,x,v) \leq 0$ in $[0,T_0]\times \R^{2d}$.
\end{lemma}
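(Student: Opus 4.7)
The plan is to use an exponentially growing barrier that dominates $g$ at infinity and reduce to the bounded-domain maximum principle, Proposition \ref{p:maximum-bounded}. First I would rewrite \eqref{e:weak-subsolution-2} in divergence form. A direct differentiation of the formula \eqref{e:a} gives the identity $\partial_j \bar a_{ij} = -\bar b_i$ (the same identity already used in the paper just before Theorem \ref{t:polynomial}), so that for any sufficiently regular $\varphi$ one has $\bar a_{ij} \partial_{ij}\varphi = \nabla_v\cdot(\bar a \nabla_v \varphi) + \bar b \cdot \nabla_v \varphi$. Interpreted weakly, this says $g$ is a weak subsolution of the divergence-form equation \eqref{e:weak-inequality} with zeroth-order coefficient $c\equiv 0$. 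Since on any bounded velocity ball $\bar a$ is uniformly elliptic (by Lemma \ref{l:a}) and $\bar b$ is bounded (by Lemma \ref{l:b}, using that $f$ is bounded), Proposition \ref{p:maximum-bounded} is directly applicable on bounded cylinders in $\R^{2d}$.

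For the barrier I would take
\[
\psi(t,x,v) := e^{Kt}\bigl(1 + |v|^2 + \eta(1+|x|^2)\bigr),
\]
with $\eta>0$ small and $K>0$ large, chosen universally. A direct computation gives $\partial_t \psi = K\psi$, $v\cdot\nabla_x\psi = 2\eta e^{Kt}\, v\cdot x$, and $\bar a_{ij}\partial_{ij}\psi = 2e^{Kt}\,\mathrm{tr}(\bar a)$. By Lemma \ref{l:a}, $\mathrm{tr}(\bar a) \lesssim (1+|v|)^{\gamma+2} \lesssim 1+|v|^2$, and the indefinite cross term is controlled by $|2\eta v\cdot x| \le \eta(|v|^2+|x|^2)$. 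Taking $K$ large enough in terms of the universal constants, I obtain $\partial_t \psi + v\cdot \nabla_x \psi - \bar a_{ij}\partial_{ij}\psi \ge 0$ pointwise on $[0,T_0]\times \R^{2d}$. Thus $\psi$ is a smooth classical supersolution, is strictly positive, and blows up as $|x|+|v|\to\infty$ uniformly in $t\in[0,T_0]$.

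Now fix $\eps>0$ and set $G_\eps := g - \eps \psi$. Then $G_\eps$ is still a weak subsolution of the same divergence-form equation. Since $g$ is globally bounded while $\psi\to\infty$ at infinity, there exists $R=R(\eps)$ so large that $G_\eps < 0$ on $\{|x|=R\}\cup\{|v|=R\}$ for all $t\in[0,T_0]$; moreover $G_\eps(0,x,v) \le -\eps\psi(0,x,v) \le 0$ by the initial hypothesis. Proposition \ref{p:maximum-bounded} applied on $[0,T_0]\times B_R\times B_R$ (with $c\equiv 0$) then yields $G_\eps \le 0$ throughout that cylinder. Letting $R\to\infty$ gives $g\le\eps\psi$ on all of $[0,T_0]\times\R^{2d}$, and sending $\eps\to 0$ completes the proof. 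The main obstacle is choosing the barrier: the hypoelliptic transport term $v\cdot\nabla_x\psi$ produces an indefinite cross term $v\cdot x$ that must be absorbed by a matched quadratic reserve in both $x$ and $v$ together with the exponential-in-$t$ prefactor; this balance works precisely because $\gamma+2 \le 2$ keeps the diffusion coefficients from growing faster than quadratically in $|v|$.
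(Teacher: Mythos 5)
Your proof is correct and follows the paper's broad strategy --- construct an explicit supersolution barrier, apply Proposition \ref{p:maximum-bounded} on bounded cylinders, and pass to the limit --- but the barrier construction and limiting procedure are genuinely different. The paper uses two separate linear barriers $\phi_1(t,v)=e^{C_1 t}(1+|v|)$ and $\phi_2(t,x)=e^{C_2 t}(1+|x|)$; since the transport term $v\cdot\nabla_x\phi_2$ has magnitude of order $|v|$ and must be beaten by $\partial_t\phi_2=C_2\phi_2$, the constant $C_2$ necessarily depends on the velocity cutoff $R_1$, forcing an iterated limit (first $R_2\to\infty$, $\eps_2\to 0$ with $R_1$ fixed, then $R_1\to\infty$, $\eps_1\to 0$). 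Your single quadratic barrier $e^{Kt}\bigl(1+|v|^2+\eta(1+|x|^2)\bigr)$ sidesteps this coupling: the indefinite cross term $2\eta\, v\cdot x$ produced by transport is absorbed by Cauchy--Schwarz into the matched $|v|^2$ and $\eta|x|^2$ reserves, so $K$ is chosen universally and a single passage $R\to\infty$ then $\eps\to 0$ suffices. A side bonus is that your barrier is globally smooth, avoiding the subtlety that $\bar a_{ij}\partial_{ij}(1+|v|)$ blows up like $1/|v|$ as $v\to 0$ (so $\phi_1$ is in fact not a pointwise supersolution near the origin; the standard fix is $\sqrt{1+|v|^2}$). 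One small caveat on your side: you invoke ``Lemma \ref{l:b}, using that $f$ is bounded'' to control $\bar b$ locally, but boundedness of $f$ is not among this lemma's hypotheses; the paper's proof has the same implicit need, and in every application of the lemma $f$ is indeed a bounded weak solution, so this is harmless but worth flagging.
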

\begin{proof}
By the bounds on $\bar a$ given in Lemma \ref{l:a}, we have
\[ \bar a_{ij}\partial_{ij} (1+|v|) \leq C_1(1+|v|)^{1+\gamma}, \]
for some constant $C_1$, and thus $\phi_1(t,v) := e^{C_1t} (1+|v|)$ satisfies
 \[ \partial_t \phi_1(t,v) \geq \bar a_{ij}(t,x,v) \partial_{ij} \phi_1(t,v).\]
Let $\eps_1>0$ be a small constant. Since $g$ is bounded, there is $R(\eps_1)>0$ such that $g - \eps_1 \phi_1<0$ whenever $|v|\geq R(\eps_1)$. Let $R_1> R(\eps_1)$, and choosing $C_2>0$ large enough depending on $R_1$, we can define $\phi_2(t,x) := (1+|x|) e^{C_2 t}$, and we have
\[ \partial_t \phi_2 + v\cdot \nabla_x \phi_2 \geq 0,\]
whenever $|v|<R_1$. Finally, for $\eps_2>0$ arbitrary, we define
\[\tilde g(t,x,v) := [g(t,x,v) - \eps_1 \phi_1(t,v) - \eps_2 \phi_2(t,x)]_+.\]
It is clear that $\tilde g$ is a subsolution as in \eqref{e:weak-inequality} with $c\equiv 0$, whenever $|v|<R_1$. For $R(\eps_2)$ sufficiently large, we have that $g - \eps_1\phi_1 - \eps_2\phi_2 <0$ for $|x|\geq R(\eps_2)$ or $|v|\geq R(\eps_1)$. Then for any $R_2>R(\eps_2)$, we have that $\tilde g = 0$ on the parabolic boundary of $[0,T_0]\times B_{R_2}\times B_{R_1}$, so Proposition \ref{p:maximum-bounded} applied to $\tilde g$ gives
\[ g - \eps_1\phi_1 - \eps_2 \phi_2 \leq 0,\quad  |v|<R_1,\, |x|< R_2.\]
Take $R_2\to\infty$ and $\eps_2\to 0$ to conclude
\[ g - \eps_1\phi_1 \leq 0, \quad |v|<R_1.\]
Take $R_1\to\infty$ and $\eps_1\to 0$, and the proof is complete.
\end{proof}
\bibliographystyle{abbrv}
\bibliography{landau}

\begin{thebibliography}{10}

\bibitem{alexandre2015apriori}
R.~Alexandre, J.~Liao, and C.~Lin.
\newblock Some a priori estimates for the homogeneous {L}andau equation with
  soft potentials.
\newblock {\em Kinet. Relat. Models}, 8(4):617--650, 2015.

\bibitem{alexandre2004landau}
R.~Alexandre and C.~Villani.
\newblock On the {L}andau approximation in plasma physics.
\newblock {\em Annales de l'Institut Henri Poincare (C) Non Linear Analysis},
  21(1):61 -- 95, 2004.

\bibitem{bardos1991}
C.~Bardos, F.~Golse, and D.~Levermore.
\newblock Fluid dynamic limits of kinetic equations. {I}. {F}ormal derivations.
\newblock {\em J. Statist. Phys.}, 63(1-2):323--344, 1991.

\bibitem{chapmancowling}
S.~Chapman and T.~G. Cowling.
\newblock {\em The mathematical theory of non-uniform gases: An account of the
  kinetic theory of viscosity, thermal conduction and diffusion in gases}.
\newblock Cambridge University Press, 3rd edition, 1970.

\bibitem{desvillettes2015landau}
L.~Desvillettes.
\newblock Entropy dissipation estimates for the {L}andau equation in the
  {C}oulomb case and applications.
\newblock {\em Journal of Functional Analysis}, 269(5):1359 -- 1403, 2015.

\bibitem{desvillettes2000landau}
L.~Desvillettes and C.~Villani.
\newblock On the spatially homogeneous {L}andau equation for hard potentials
  part {I}: existence, uniqueness and smoothness.
\newblock {\em Communications in Partial Differential Equations},
  25(1-2):179--259, 2000.

\bibitem{gamba2009}
I.~M. Gamba, V.~Panferov, and C.~Villani.
\newblock Upper {M}axwellian bounds for the spatially homogeneous {B}oltzmann
  equation.
\newblock {\em Arch. Ration. Mech. Anal.}, 194(1):253--282, 2009.

\bibitem{golse2016}
F.~Golse, C.~Imbert, C.~Mouhot, and A.~Vasseur.
\newblock Harnack inequality for kinetic {F}okker-{P}lanck equations with rough
  coefficients and application to the {L}andau equation.
\newblock {\em arXiv preprint arXiv:1607.08068}, 2016.

\bibitem{gressman2011boltzmann}
P.~T. Gressman and R.~M. Strain.
\newblock Global classical solutions of the {B}oltzmann equation without
  angular cut-off.
\newblock {\em J. Amer. Math. Soc.}, 24(3):771--847, 2011.

\bibitem{guo2002periodic}
Y.~Guo.
\newblock The {L}andau equation in a periodic box.
\newblock {\em Communications in Mathematical Physics}, 231(3):391--434, 2002.

\bibitem{hanlin}
Q.~Han and F.-H. Lin.
\newblock {\em Elliptic partial differential equations}.
\newblock Courant Lecture Notes. Courant Institute of Mathematical Sciences,
  New York University, 2nd edition, 2011.

\bibitem{imbert2016weak}
C.~Imbert and L.~Silvestre.
\newblock Weak {H}arnack inequality for the {B}oltzmann equation without
  cut-off.
\newblock {\em arXiv preprint arXiv:1608.07571}, 2016.

\bibitem{lifschitzpitaevskii}
E.~M. Lifshitz and L.~P. Pitaevskii.
\newblock {\em Physical Kinetics}, volume~10 of {\em Course of Theoretical
  Physics}.
\newblock Butterworth-Heinemann, 1st edition, 1981.

\bibitem{liu2014regularization}
S.~Liu and X.~Ma.
\newblock Regularizing effects for the classical solutions to the {L}andau
  equation in the whole space.
\newblock {\em Journal of Mathematical Analysis and Applications}, 417(1):123
  -- 143, 2014.

\bibitem{mouhot2006equilibrium}
C.~Mouhot and L.~Neumann.
\newblock Quantitative perturbative study of convergence to equilibrium for
  collisional kinetic models in the torus.
\newblock {\em Nonlinearity}, 19(4):969, 2006.

\bibitem{pascucci2004ultraparabolic}
A.~Pascucci and S.~Polidoro.
\newblock The {M}oser's iterative method for a class of ultraparabolic
  equations.
\newblock {\em Communications in Contemporary Mathematics}, 06(03):395--417,
  2004.

\bibitem{silvestre2015landau}
L.~Silvestre.
\newblock Upper bounds for parabolic equations and the {L}andau equation.
\newblock {\em arXiv preprint, arXiv:1511.03248}, 2015.

\bibitem{luis-Boltzmann}
L.~Silvestre.
\newblock A new regularization mechanism for the {B}oltzmann equation without
  cut-off.
\newblock {\em Comm. Math. Phys.}, 348(1):69--100, 2016.

\bibitem{villani1996global}
C.~Villani.
\newblock On the {C}auchy problem for {L}andau equation: sequential stability,
  global existence.
\newblock {\em Adv. Differential Equations}, 1(5):793--816, 1996.

\bibitem{villani1998landau}
C.~Villani.
\newblock On the spatially homogeneous {L}andau equation for {M}axwellian
  molecules.
\newblock {\em Mathematical Models and Methods in Applied Sciences},
  08(06):957--983, 1998.

\bibitem{wang2009ultraparabolic}
W.~Wang and L.~Zhang.
\newblock The {$C^\alpha$} regularity of a class of non-homogeneous
  ultraparabolic equations.
\newblock {\em Science in China Series A: Mathematics}, 52(8):1589--1606, 2009.

\bibitem{wang2011ultraparabolic}
W.~Wang and L.~Zhang.
\newblock The {$C^\alpha$} regularity of weak solutions of ultraparabolic
  equations.
\newblock {\em Discrete Contin. Dyn. Syst.}, 29(3):1261--1275, 2011.

\bibitem{wu2014global}
K.-C. Wu.
\newblock Global in time estimates for the spatially homogeneous {L}andau
  equation with soft potentials.
\newblock {\em Journal of Functional Analysis}, 266(5):3134 -- 3155, 2014.

\end{thebibliography}

\end{document}